\newcommand{\tpitchfork}{%
  \vbox{
    \baselineskip\z@skip
    \lineskip-.52ex
    \lineskiplimit\maxdimen
    \m@th
    \ialign{##\crcr\hidewidth\smash{$-$}\hidewidth\crcr$\pitchfork$\crcr}
  }%
  
}
\newtheorem*{theorema}{Theorem A}
\newtheorem*{theoremb}{Theorem B}
\newtheorem*{theoremA4.1}{the Main Theorem3.1}
\newtheorem*{theoremc}{Theorem C}
\newtheorem*{theoremd}{Theorem D}
\newtheorem{prop}{Proposition}[section]
\newtheorem{lemma}[prop]{Lemma}
\theoremstyle{remark}
\newtheorem{remark}[prop]{Remark}
\numberwithin{equation}{section}
\newcommand{\BF}{\boldmath}
\newcommand{\Dim}{{\text{M}}}
\author{Yoshitaka Saiki, Hiroki Takahasi, James A. Yorke}
\address{Graduate School of Business Administration, Hitotsubashi University, Tokyo,
186-8601, JAPAN} 
\email{yoshi.saiki@r.hit-u.ac.jp}
\address{Keio Institute of Pure and Applied Sciences (KiPAS), Department of Mathematics,
Keio University, Yokohama,
223-8522, JAPAN} 
\email{hiroki@math.keio.ac.jp}
\address{Institute for Physical Science and Technology and the Departments of Mathematics and Physics,
University of Maryland College Park, MD 20742, USA} 
\email{yorke@umd.edu}
\date{\today}
\subjclass[2020]{37C05, 37D25}
\thanks{{\it Keywords}: piecewise linear map; baker map; periodic point; equidistribution; mixing}
\begin{document}
\date{\today}
\title[The twisted baker map]{The twisted baker map}

\maketitle

\begin{abstract}
  As a model to provide a hands-on, elementary understanding of 
  ``vortex dynamics'',
  we introduce a piecewise linear non-invertible map called {\it a twisted baker map}.
  We show that the set of hyperbolic repelling
  periodic points with complex conjugate eigenvalues  
  and that without complex conjugate eigenvalues 
  are simultaneously dense in the phase space. 
  We also show that these two sets  
  equidistribute with respect to the normalized Lebesgue measure, in spite of a non-uniformity in their Lyapunov exponents.
  \end{abstract}
  
\section{Introduction}
\label{sec:introduction}
A natural approach to understanding complicated systems is to 
analyze a collection of simple 
examples that retain some essential features of the complexity of the original systems.
The baker map \cite{seidel_1933} 
is one of the simplest models of chaotic dynamical systems. 
It divides the unit square into $p\geq2$ equal vertical strips,
and maps each strip to a horizontal one by squeezing it vertically by the factor $p$ and stretching it horizontally by the same factor. The horizontal strips are laid out covering the square.
Due to the squeezing and stretching, small initial errors get amplified under iteration,
and result in an unpredictable long-term behavior.
The name ``baker'' is used since the action of the map is reminiscent of the kneading dough \cite{halmos_1956}.
For fat (area-expanding), skinny (area-contracting) and heterochaos baker maps, see for example \cite{alexander_1984}, \cite{farmer_1983}  and \cite{saiki21} respectively.

The baker map and these its variants are ``anisotropic'', in that
all coordinate directions are preserved under the action of the maps. As a consequence,
all eigenvalues of periodic points are real.
 These maps
 may not be appropriate for modeling essential features of fluid turbulence,
considering Kolmogorov's universality law for small scales, 
in which vortex dynamics is statistically isotropic~\cite{kolmogorov41}.
 In fact, in numerical investigations of high dimensional systems such as those modeling fluid turbulence,  
 it is common to find periodic points with complex conjugate eigenvalues~\cite{zammert14}. 
 Taking these physical and experimental backgrounds into consideration, we need a simple model that  provides a hands-on, elementary understanding of vortex dynamics.
% is desired.
 
 In this paper, we introduce a piecewise linear map that models an interaction of stretching and twisting in vortex dynamics.
This map does not preserve coordinate directions,
and has periodic points of complex conjugate eigenvalues.
  Let $\Dim\geq2$ be an integer and
let \[X =\{(x_1,\ldots,x_\Dim)\in\mathbb R^\Dim\colon -1\le x_1\le 1\text{ and }0\le x_j\le 1,\ j=2,\ldots,\Dim\}.\]
We put \[
X_L=\{(x_1,\ldots,x_\Dim)\in X:x_1<0\}\quad\text{and}\quad
    X_R=\{(x_1,\ldots,x_\Dim)\in X:x_1\ge0\}.\]
Define a {\bf twisted baker map} $F\colon X\to X$ by
 \[
  F(x_1,\dots,x_\Dim)=
  \begin{cases}
    \left(1+2x_1,x_2\ldots,x_\Dim\right)\text{ on }X_L, \\
    \left(1-2x_\Dim,x_1,\ldots,x_{
    \Dim-1}\right)\text{ on }X_R.
  \end{cases}\]
The twisted baker map can be written as a composition $F=B\circ T$ of two maps:
 \begin{equation}\label{T}
  T(x_1,\dots,x_\Dim)=
  \begin{cases}
    \left(x_1,\dots,x_\Dim\right)\text{ on }X_L,\\
    \left(x_\Dim,x_1,\ldots,x_{
    \Dim-1}\right)\text{ on }X_R, 
  \end{cases}
  \end{equation}
and
 \begin{equation}\label{B}
  B(x_1,\dots,x_\Dim)=
    \left(\tau(x_1),x_2,x_3\ldots,x_\Dim\right),
  \end{equation}
where $\tau\colon[-1,1]\to [-1,1]$ is the tent map given by
\[
  \tau(x_1)=
  \begin{cases}
    1+2x_1\text{ for }x_1<0, \\
    1-2x_1\text{ for }x_1\geq0.
  \end{cases}
  \]
  
The map $T$ ``twists'' the coordinates in part of the phase space, see Figure~\ref{fig:map} for $\Dim=2$ and Figure~\ref{fig:map3d} for $\Dim\ge 3$.
Fixed points of $F$ are the point 
$(1/3,1/3,\ldots,1/3)$ in $R$ and all points in the set 
\[N=\{(x_1,\ldots,x_\Dim)\in X\colon x_1=-1\}.\]
Let $S$ denote the set of discontinuities of $F$. We have
\[S=\{(x_1,\ldots,x_\Dim)\in X\colon x_1=0\}.\] 
 If we replace the tent map $\tau$ by the map $1+2x_1$ for $x_1<0$
and $-1+2x_1$ for $x_1\geq0$, the composed map may look more similar to the baker map, with more discontinuities than $F$. All our results on $F$ stated below remain essentially the
same for this map.

\begin{figure}[tb]
    \includegraphics[width=0.8\textwidth]{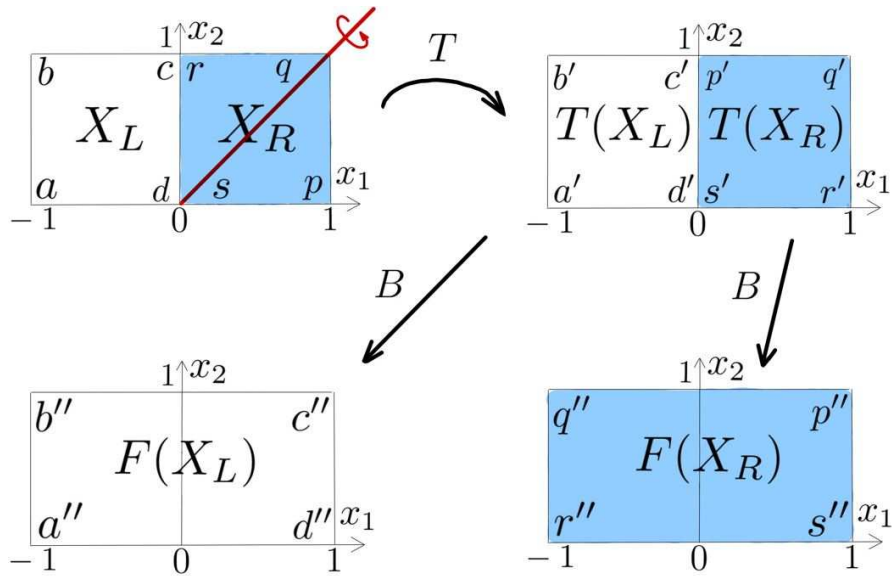}
    %{fig1-ver4.JPG}
    \caption{
     {\bf\BF The twisted baker map $F=B\circ T$ with $\Dim=2$.}
%    {\bf The twisted baker map $F=B\circ T$ with $\Dim=2$.}  
    The map $T$ twists (flips) the square $X_R$ onto itself, fixing the diagonal from $(0,0)$ to $(1,1)$. The map $B$ 
     stretches the $x_1$ coordinate of the rectangles $T(X_L)$, $T(X_R)$ onto intervals that contain $(-1,1)$, with flip for $T(X_L)$,
    leaving the other coordinate unchanged. The colored areas show $X_R$ 
    and its images under $T$ and $F$. }
    \label{fig:map}
\end{figure}
\begin{figure}[tb]
    \includegraphics[width=0.95\textwidth,height=0.7\textwidth
    ]{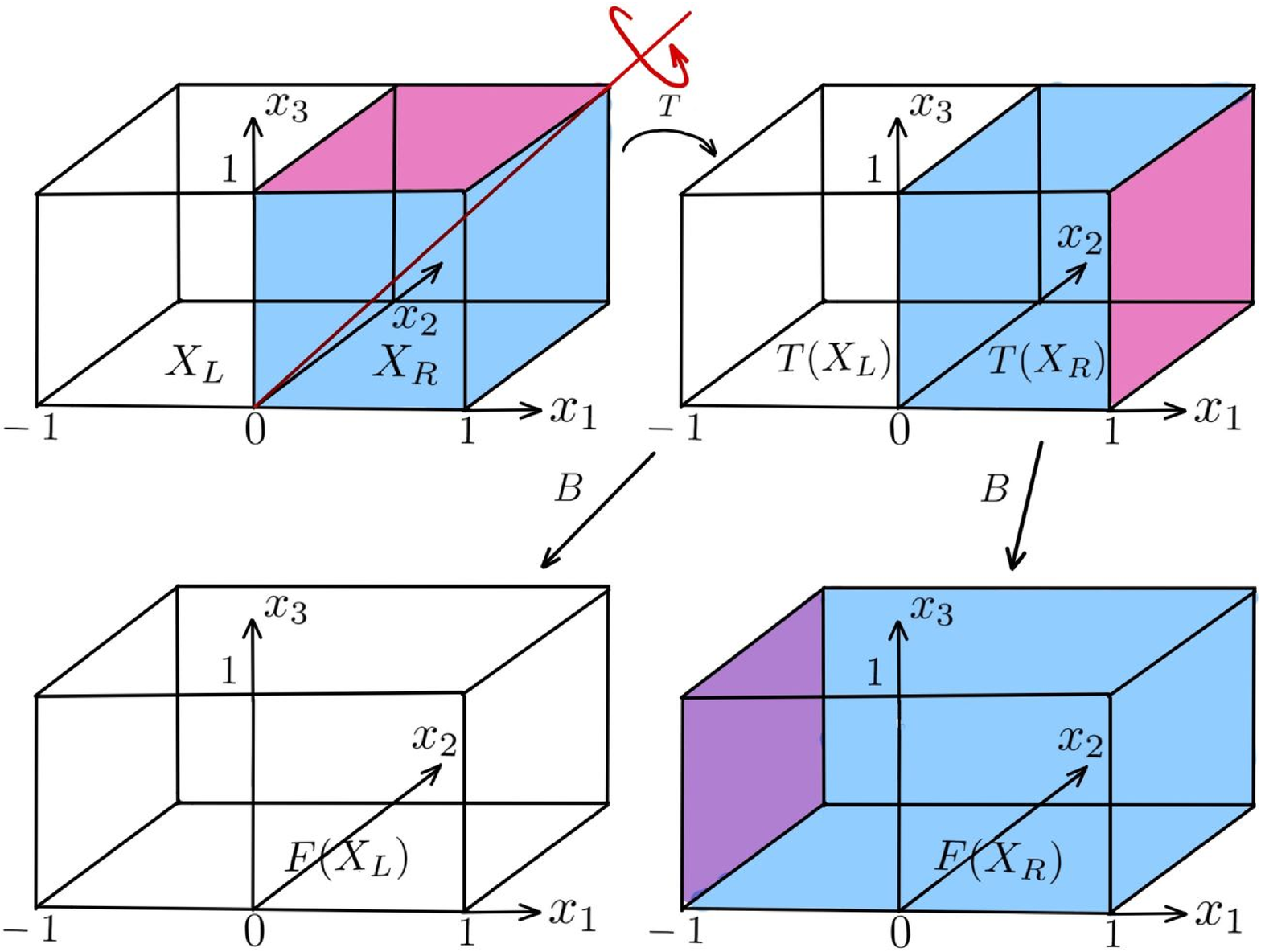}
    %{fig2-ver4.JPG}
    \caption{
    {\bf\BF The twisted baker map $F=B\circ T$ with $\Dim\geq3$.} 
    The map $T$ twists 
    the cube $X_R$ onto itself, fixing the 
    diagonal from $(0,0,\ldots,0)$ to $(1,1,\ldots,1)$. The map $B$ stretches the $x_1$ coordinate of the cubes $T(X_L)$, $T(X_R)$ onto intervals that contain $(-1,1)$, with flip for $T(X_L)$, 
    leaving other coordinates unchanged. The colored areas show $X_R$ 
    and its images under $T$ and $F$. The images of the purple
    face in the first panel are shown as an illustration of how it maps under $T$ and $F$.}
    \label{fig:map3d}
\end{figure}

For the baker map and their variants mentioned above
\cite{alexander_1984,farmer_1983,saiki21},
periodic points are dense in their maximal invariant sets. 
Our first result establishes the density of periodic points for the twisted baker map.
A new feature here is a simultaneous density of periodic points with 
complex conjugate eigenvalues and that with only real ones.
By a {\bf periodic point} of $F$ of period $n$ we mean a fixed point of $F^n$.
   A periodic point $x$ of $F$ of period $n\geq1$ is {\bf hyperbolic repelling} if  the set of eigenvalues of
  the Jacobian matrix of $F^n$ at $x$
does not intersect 
  the unit disk $\{z\in\mathbb C\colon|z|\leq1\}$. 
 If $x$ is a boundary point of $X_L$ or $X_R$, some of the first-order derivatives in the Jacobian matrix are
 taken as one-sided derivatives. 
 
For $n\geq0$ and $x\in X$,
let $JF^n(x)$
denote the Jacobian matrix of $F^n$ at $x$ with respect to the canonical basis of $\mathbb R^{\Dim}$, and let ${\rm spec}(JF^n(x))$ denote the set of eigenvalues of $JF^n(x)$.
  For each $n\geq1$ we consider three sets of periodic points of period $n$: 
 \[\begin{split}
 {\rm fix}(n)&=\{x\in X\setminus N \colon F^n(x)=x\},\\
 {\rm fix}_{\mathbb R}(n)&=\{x\in {\rm fix}(n)\colon {\rm spec}(JF^n(x))\subset\mathbb R\},\\
 {\rm fix}_{\mathbb C}(n)&=\{x\in {\rm fix}(n)\colon {\rm spec}(JF^n(x))\not\subset\mathbb R\}.
 \end{split}\]
\begin{theorema}
 Let $F\colon X\to X$ be the twisted baker map.
 \begin{itemize}
     \item[(a)] All periodic points in $\bigcup_{n=1}^\infty{\rm fix}(n)$ are hyperbolic repelling.
     \item[(b)] Both $\bigcup_{n=1}^\infty{\rm fix}_{\mathbb R}(n)$ and
     $\bigcup_{n=1}^\infty{\rm fix}_{\mathbb C}(n)$ are dense in $X$.
       \end{itemize}
 \end{theorema}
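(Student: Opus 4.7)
My plan is to reduce the whole statement to an explicit computation of $DF^n$ along periodic orbits, and then use the inverse branches to build periodic points of prescribed combinatorial type. The Jacobian on $X_L$ is $J_L := \mathrm{diag}(2,1,\ldots,1)$, and on $X_R$ it is the matrix $J_R$ sending $e_i \mapsto e_{i+1}$ for $i<\Dim$ and $e_\Dim\mapsto -2e_1$. I will exploit the algebraic identity $J_L^c J_R = D_c\cdot P$, where $P$ is the cyclic shift $e_i\mapsto e_{i+1}\pmod{\Dim}$ and $D_c := \mathrm{diag}(-2^{c+1},1,\ldots,1)$. Because conjugation by $P$ cyclically shifts the nontrivial diagonal entry of $D_c$, a short induction yields
\[
(J_L^{c_m} J_R)\cdots (J_L^{c_1} J_R) \;=\; \mathrm{diag}(\beta_1,\ldots,\beta_\Dim)\cdot P^m,
\]
where $\beta_j := \prod_{k\in I_j}(-2^{c_k+1})$ with $I_j := \{k\in\{1,\ldots,m\}\colon k\equiv j\pmod{\Dim}\}$.

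For part~(a), a point $x\in\mathrm{fix}(n)\setminus N$ must visit $X_R$ at least once, for otherwise its orbit is trapped in $N$; hence after a cyclic conjugation $DF^n(x)$ has the form above with $m\geq 1$. The cycles of $P^m$ share the common length $\ell = \Dim/\gcd(m,\Dim)$, and on each cycle $C$ the eigenvalues satisfy $\lambda^\ell = \prod_{j\in C}\beta_j$; every such cycle contains at least one index $j$ with $|\beta_j|\geq 2$ (namely any $j\in C\cap\{1,\ldots,\min(m,\Dim)\}$), so $|\lambda|\geq 2^{1/\ell}>1$. The same formula classifies the spectrum for part~(b): $\mathrm{spec}(DF^n(x))\subset\mathbb R$ exactly when $\Dim\mid m(x)$, where $m(x)$ is the number of $R$-visits in one period. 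Indeed, if $\Dim\mid m$ then $P^m=I$ and $DF^n(x)$ is similar to a real diagonal matrix; otherwise $\ell\geq 2$, and either $\ell\geq 3$ (which automatically produces non-real roots of $\lambda^\ell=\mathrm{const}$), or $\ell=2$, which forces $\Dim$ even and $\gcd(m,\Dim)=\Dim/2$, and an elementary parity count gives $|I_j|+|I_{j+\Dim/2}|$ odd for every paired index so that $\beta_j\beta_{j+\Dim/2}<0$ and $\lambda^2<0$.

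To establish density, I fix $y\in X$ and $\eps>0$. The set $\bigcup_{k\geq 0} F^{-k}(N)$ is a countable union of codimension-one affine subsets, hence meager in $X$; so after replacing $y$ by a nearby point I may assume its itinerary $(\tau_i)$ visits $X_R$ infinitely often. The cylinder $C_{\tau_1\cdots\tau_{n_0}}$ equals the image $g_{\tau_1}\circ\cdots\circ g_{\tau_{n_0}}(X)$ of the inverse-branch composition. Because $P$ is orthogonal, the factorization above shows that the singular values of $DF^{n_0}$ are, up to one multiplicative factor $2^{c_0}$, the numbers $|\beta_j|$, each bounded below by $2^{\lfloor m_0/\Dim\rfloor}$ where $m_0$ is the $R$-count in $\tau_1\cdots\tau_{n_0}$. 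Hence $\mathrm{diam}(C_{\tau_1\cdots\tau_{n_0}})\leq 2^{-\lfloor m_0/\Dim\rfloor}\mathrm{diam}(X)\to 0$ as $n_0\to\infty$. I then extend $\tau_1\cdots\tau_{n_0}$ to a periodic coding $\sigma$ of length $n = n_0+\Dim$ by appending a tail whose $R$-count is chosen to put $m(\sigma)\equiv 0$ (respectively $\not\equiv 0$) $\pmod{\Dim}$; the unique fixed point $x_\sigma$ of $g_{\sigma_1}\circ\cdots\circ g_{\sigma_n}$ automatically lies in $C_{\tau_1\cdots\tau_{n_0}}$, hence within $\eps$ of $y$, and is of the prescribed type by the classification.

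The hardest step is the cylinder-diameter estimate. Because $F$ is non-invertible, a forward itinerary alone does not determine a point, and cylinders do not shrink a priori. The rescue is the diagonal-times-cyclic-permutation form of $DF^n$, which through the orthogonality of $P$ converts the eigenvalue expansion of part~(a) into singular-value expansion and hence uniform contraction of the inverse branches once the itinerary contains enough $R$'s. Everything else is routine linear algebra, an elementary parity count, and standard Baire-category approximation.
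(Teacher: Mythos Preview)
Your argument is correct and takes a more explicitly algebraic route than the paper. For part~(a) the paper argues geometrically: Lemma~\ref{q-lemma} shows that basic rectangles shrink along any itinerary with infinitely many $R$'s, and hyperbolicity is read off from the fact that $F^n$ must eventually expand the rectangle in every direction. You instead compute $JF^n$ directly as $\mathrm{diag}(\beta_1,\dots,\beta_\Dim)\,P^m$ (after cyclic conjugation) and bound all eigenvalue moduli below by $2^{1/\ell}$ via the cycle decomposition of $P^m$. For the real/complex dichotomy the paper proves only one-sided criteria (Lemma~\ref{twist-lem}: $\Dim\mid t\Rightarrow$ real; Lemma~\ref{c-eigenvalue}: $t\equiv\pm1\pmod\Dim\Rightarrow$ complex), which already suffice for density, whereas your parity count in the $\ell=2$ case yields the full characterization $\mathrm{spec}(JF^n(x))\subset\mathbb R\Longleftrightarrow\Dim\mid m(x)$, a strictly sharper statement. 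For cylinder shrinking the paper tracks side lengths inductively, while you obtain the same bound from the singular values of $\mathrm{diag}(\beta)\,P^m\,J_L^{c_0}$ using the orthogonality of $P$. Once a small cylinder is in hand, both proofs finish identically: append a short tail to prescribe the $R$-count modulo $\Dim$ and take the unique fixed point of the inverse-branch composition. Your exact dichotomy would in fact streamline later parts of the paper (Proposition~\ref{ratio-prop} and Theorem~C), since it gives ${\rm fix}_{\mathbb R}(n)={\rm fix}(n,0)$ on the nose rather than merely an inclusion.
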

 
The twisted baker map is expanding in all directions at any periodic orbit not in $N$.
The amount of this expansion is not uniform in the following sense.
Let $x$ be 
 a hyperbolic repelling periodic point of period $\gamma(x)$. 
 The number
 \[\chi(x)=\min\left\{|\lambda|^{\frac{1}{\gamma(x)}}\colon\lambda\in {\rm spec}(JF^{\gamma(x)}(x))\right\}\]
 is strictly greater than $1$.
We say $F$ is {\bf non-uniformly expanding on periodic points} if  $\inf_{x\in U} \chi(x) =1$ 
for any non-empty open subset $U$ of $X$,
where the infimum is taken over all  hyperbolic repelling periodic points of $F$ contained in $U$.

\begin{theoremb}
The twisted baker map is non-uniformly expanding on periodic points.
 \end{theoremb}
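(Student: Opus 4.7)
The plan is to exhibit, in any nonempty open $U\subseteq X$, a sequence of hyperbolic repelling periodic points $x_k\in U$ whose Jacobian has an eigenvalue of modulus bounded independently of the period $n_k\to\infty$; granted this, $\chi(x_k)\to 1$, which together with $\chi>1$ from Theorem A(a) yields $\inf_{x\in U}\chi(x)=1$. Write $D=DF|_{X_L}=\mathrm{diag}(2,1,\ldots,1)$ and $A=DF|_{X_R}$, so that $Ae_j=e_{j+1}$ for $1\le j\le\Dim-1$ and $Ae_\Dim=-2e_1$. The key algebraic identity, verified by iterating $A$ on each basis vector, is $A^\Dim=-2I$. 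Consequently the itinerary block $L^kR^\Dim$ contributes Jacobian $A^\Dim D^k=-2D^k$, a diagonal matrix with one ``large'' entry $-2^{k+1}$ and $\Dim-1$ entries equal to $-2$.

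By Theorem A(b), fix a periodic point $y\in U\setminus N$ of period $m$ with itinerary $\omega=\omega_0\cdots\omega_{m-1}$, and let $J_y=JF^m(y)$. Theorem A(a) makes $J_y$ hyperbolic expanding, so $J_y^{-1}$ has spectral radius less than $1$. For $j,k\ge 1$ consider the periodic itinerary $\sigma=\omega^jL^kR^\Dim$ of length $n=jm+k+\Dim$. The composition $G_\sigma$ of the corresponding inverse branches is an affine self-map of the compact convex set $X$, so by Brouwer it has a fixed point $x_{j,k}$, which is a periodic point of $F$ with itinerary $\sigma$ and avoids $N$ because $\sigma$ is not the constant word $L$. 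Let $C_j$ denote the set of $x\in X$ whose first $jm$ itinerary symbols equal $\omega^j$; then $C_j$ contains both $y$ and $x_{j,k}$ and has diameter at most $\|J_y^{-j}\|\cdot\mathrm{diam}(X)$, which tends to $0$ by Gelfand's formula. Hence for all sufficiently large $j$, $C_j\subseteq U$ and $x_{j,k}\in U$ uniformly in $k$. The Jacobian of the orbit is $JF^n(x_{j,k})=A^\Dim D^kJ_y^j=-2D^kJ_y^j$.

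The spectral heart of the argument is the claim that whenever $(J_y^j)_{11}\ne 0$, exactly $\Dim-1$ eigenvalues of $-2D^kJ_y^j$ stay bounded as $k\to\infty$. This follows by tracking the $k$-dependence in the characteristic polynomial $p_k(\lambda)=\det(\lambda I+2D^kJ_y^j)$: the coefficient of $\lambda^{\Dim-i}$ is a sum of $i\times i$ principal minors of $2D^kJ_y^j$, and each minor whose index set contains $1$ carries an additional factor of $2^k$ inherited from the first row of $D^k$. Dividing by $2^k$ and sending $k\to\infty$, $p_k(\lambda)/2^k$ converges to a polynomial $\tilde p(\lambda)$ whose leading coefficient is a nonzero multiple of $(J_y^j)_{11}$; when $(J_y^j)_{11}\ne 0$, $\tilde p$ has degree $\Dim-1$, and its roots are the limits of $\Dim-1$ bounded eigenvalues, while the remaining eigenvalue grows like $2^{k+1}(J_y^j)_{11}$.

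It remains to produce $j$ arbitrarily large with $(J_y^j)_{11}\ne 0$. Taking the $(1,1)$-entry of the Cayley--Hamilton identity for $J_y$ and using $\det J_y\ne 0$ shows $(J_y^j)_{11}\ne 0$ for some $j\in\{1,\ldots,\Dim\}$; applying the same observation to the powers $J_y^\ell$ shows that such $j$ occur arbitrarily large. Fixing $j^*$ with $C_{j^*}\subseteq U$ and $(J_y^{j^*})_{11}\ne 0$, the orbit $x_{j^*,k}\in U$ is a hyperbolic repelling periodic point, and $\chi(x_{j^*,k})\le K^{1/n_k}\to 1$ where $K$ is any uniform upper bound on the $\Dim-1$ bounded eigenvalues. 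The main obstacle is the characteristic-polynomial analysis above and the need to rule out the vanishing of $(J_y^j)_{11}$, handled by the Cayley--Hamilton trick.
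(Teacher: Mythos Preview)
Your argument is correct (modulo one minor technicality below) but follows a substantially different and heavier route than the paper's. The paper arranges the \emph{entire} Jacobian to be diagonal by taking itineraries $L^{p_1-1}RL^{p_2-1}R\cdots L^{p_n-1}R$ with $n$ a multiple of $\Dim$; then Lemma~\ref{diagonal} makes the eigenvalues explicit powers of $2$, and choosing $p_n$ to dominate $\sum_{k<n}p_k$ forces all but one diagonal entry to have exponent that is $o$ of the period, giving $\chi\to1$ by inspection. Localization to $U$ is achieved by prepending a prefix whose twist number is already a multiple of $\Dim$, which preserves diagonality. Your approach instead anchors at an arbitrary periodic $y\in U$, appends $L^kR^\Dim$, and uses $A^\Dim=-2I$ to reduce the Jacobian to $-2D^kJ_y^j$; since $J_y^j$ need not be diagonal, you then extract $\Dim-1$ bounded eigenvalues via the asymptotics of the characteristic polynomial, and invoke Cayley--Hamilton to guarantee $(J_y^j)_{11}\ne0$ for arbitrarily large $j$. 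Both routes are valid, but the paper's avoids any spectral perturbation analysis by simply keeping the Jacobian diagonal throughout; what your approach buys is that it does not require engineering the twist number of the localizing prefix.

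One small gap: the Brouwer step asserts that the fixed point of $G_\sigma$ is a periodic point of $F$ with itinerary $\sigma$, but the extended inverse branch $G_L$ can land on the discontinuity set $S=\{x_1=0\}\subset X_R$, where $F\circ G_L$ differs from the identity, so a priori the fixed point could sit on the boundary of $[\sigma]$ with the wrong itinerary. This is easily repaired: since the infinite repetition of $\sigma$ lies in $\Sigma$, Lemma~\ref{q-lemma}(b) directly furnishes, for all large $k$, a hyperbolic repelling fixed point of $F^{n}$ in ${\rm int}([\sigma])$, and Lemma~\ref{lem-S} rules out periodic points on $S$, so the itinerary is as claimed.
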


  Proofs of the two theorems rely on
 coding of points of $X$ into sequences of $L$ and $R$ by following histories of their orbits.
        For each point $x\in X$ we associate a {\bf kneading sequence},
    a one-sided infinite sequence $\{a_n(x)\}_{n=0}^\infty$ of $L$ and $R$ by
\begin{eqnarray}\label{def a}
  a_n(x)=
  \begin{cases}
   L\text{ if }F^n(x)\in X_L \\
    R\text{ if }F^n(x)\in X_R.
  \end{cases}
  \label{eq:a_n}
\end{eqnarray}
        The map $F$
sends $X_L$ and $X_R$ affinely onto their images that contain ${\rm int}(X)$,
where ${\rm int}(\cdot)$ denotes the interior operation.
 Points whose kneading sequences coincide up to the $n$-th position form a rectangle,
 called a basic $n$-rectangle (see Section~\ref{basic}): $F^n$ is one-to-one on this rectangle and its image under $F^n$ covers ${\rm int}(X)$. A main step in the proof of Theorem~A is to show that Lebesgue almost every point in $X$ is contained in rectangles of arbitrarily small diameter. 
 Then, periodic points are coded by periodic kneading sequences, and the existence of a complex conjugate eigenvalue is partially determined by the frequency of the symbol $R$ in the kneading sequence. 
  Moreover,
 the {\bf distortion} (the ratio of the maximal and minimal side lengths) of the rectangle is  determined by 
 how $R$ appears in the kneading sequence. To prove Theorem~B, we construct sequences of periodic points  for which the appearance of $R$ is very much biased.
 
All basic $n$-rectangles fill up $X$, and all of them but the one with the kneading sequence $L^n=LL\cdots L$ ($n$-times)
contain exactly one 
 point from
 ${\rm fix}(n)$.
 Although the distortions of these rectangles are not uniform,
 the next theorem asserts that 
the sets of
 hyperbolic repelling periodic points with and without complex conjugate eigenvalues equidistribute with respect to the normalized restriction of the Lebesgue measure to $X$, denoted by $m$.
Note that $m$ is an $F$-invariant measure.
  
 \begin{theoremc}
 Let $F\colon X\to X$ be the twisted baker map
and let ${\mathbb K}={\mathbb R}$ or ${\mathbb K}={\mathbb C}$.
  For any continuous function $\phi\colon X\to\mathbb R$ we have
 \[
 \lim_{n\to\infty}\frac{1}{\#{\rm fix}_{\mathbb K}(n)}\sum_{x\in{\rm fix}_{\mathbb K}(n)}\phi(x)=\int\phi {\rm d}m.\]
  \end{theoremc}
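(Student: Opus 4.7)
The strategy is to partition $X$ into the $2^n$ basic $n$-rectangles, each of normalized Lebesgue measure $2^{-n}$: since $F$ has Jacobian of absolute value $2$ on each of $X_L$, $X_R$, and $F^n$ sends every basic $n$-rectangle $R_\omega$ affinely onto $\mathrm{int}(X)$, we have $m(R_\omega) = 2^{-n}$. Each $R_\omega$ with $\omega \in \{L, R\}^n \setminus \{L^n\}$ contains the unique periodic point $x_\omega \in \mathrm{fix}(n)$. Set $S_{\mathbb K}(n) = \{\omega : x_\omega \in \mathrm{fix}_{\mathbb K}(n)\}$ and $E_{\mathbb K}(n) = \bigsqcup_{\omega \in S_{\mathbb K}(n)} R_\omega$, so that $m(E_{\mathbb K}(n)) = 2^{-n}\,\#S_{\mathbb K}(n)$.

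For continuous $\phi$, I compare the empirical average to a Riemann sum:
\[
\int \phi\, d\mu_n^{\mathbb K} = \frac{1}{m(E_{\mathbb K}(n))}\sum_{\omega \in S_{\mathbb K}(n)} \phi(x_\omega)\, m(R_\omega).
\]
Replacing each $\phi(x_\omega)\,m(R_\omega)$ by $\int_{R_\omega}\phi\,dm$ introduces an error bounded by $\sum_\omega m(R_\omega)\cdot\mathrm{osc}_{R_\omega}\phi$. Splitting by whether $\mathrm{diam}(R_\omega)>\eta$ and invoking the main measure-theoretic step in the proof of Theorem~A (Lebesgue-a.e.\ point lies in basic rectangles of arbitrarily small diameter, whence the union of the large-diameter ones has vanishing $m$-measure as $n\to\infty$), this total error is $o_n(1)$. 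Hence
\[
\int \phi\, d\mu_n^{\mathbb K} = \frac{1}{m(E_{\mathbb K}(n))}\int_{E_{\mathbb K}(n)} \phi\, dm + o_n(1),
\]
reducing the theorem to weak-$*$ convergence of the conditional probability measure $m|_{E_{\mathbb K}(n)}/m(E_{\mathbb K}(n))$ to $m$.

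Since these measures are determined by their values on the algebra generated by the basic $k$-rectangles (over all $k$), it suffices to establish, for every $k\ge 1$ and every prefix $\omega_*\in\{L,R\}^k$,
\[
\lim_{n\to\infty} \frac{\#\{\omega \in S_{\mathbb K}(n) : \omega|_k = \omega_*\}}{\#\,S_{\mathbb K}(n)} = 2^{-k}.
\]
I use the factorization $JF^n = D(\omega)\cdot P^{k(\omega)}$, where $P$ is the cyclic coordinate-permutation matrix, $k(\omega)$ is the number of $R$'s in $\omega$, and $D(\omega)$ is a diagonal matrix built from the positions of the $R$'s. The eigenvalue type is a function of $k(\omega)\bmod\Dim$ together with sign conditions on the cycle products of $D(\omega)$ under $P^{k(\omega)}$. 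Fixing a length-$k$ prefix multiplies $D(\omega)$ by a fixed diagonal matrix (up to a fixed cyclic reindexing of entries) and shifts $k(\omega)\bmod\Dim$ by a constant, so each type class should have the same asymptotic density among extensions as without prefix conditioning.

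The main obstacle is this last combinatorial identity. For $\Dim=2$ the eigenvalue type is determined by the parity of $k(\omega)$ and the count is immediate. For $\Dim\ge 3$ one must analyze the sign conditions on cycle products governing when all eigenvalues are real, verify that the resulting classes have positive asymptotic density, and verify that this density is unchanged under any fixed prefix conditioning. Controlling the head-pointer combinatorics that generate $D(\omega)$, and showing that the induced partition of $\{L,R\}^n$ is asymptotically Bernoulli on finite-dimensional cylinders, is the central technical hurdle.
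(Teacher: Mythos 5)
Your reduction is fine as far as it goes: partitioning into basic $n$-rectangles, replacing the empirical average over ${\rm fix}_{\mathbb K}(n)$ by an integral against the normalized restriction of $m$ to $E_{\mathbb K}(n)$, and controlling the Riemann-sum error through the almost-everywhere shrinking of basic rectangles (plus a positive lower bound on $m(E_{\mathbb K}(n))$, which you use tacitly) are all sound. But the proof then hinges entirely on the cylinder-density identity $\#\{\omega\in S_{\mathbb K}(n):\omega|_k=\omega_*\}/\#S_{\mathbb K}(n)\to 2^{-k}$, and this is exactly the step you do not prove --- you write that each type class ``should have the same asymptotic density'' under prefix conditioning and call it the central technical hurdle. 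It is a genuine gap, and a hard one: for $\Dim\geq 3$ the real/complex dichotomy is \emph{not} a function of $t(n,x)\bmod \Dim$ alone. The matrix $JF^n(x)$ is a monomial matrix whose eigenvalues are roots of the cycle products of $P^{t(n,x)}$, so when $\gcd(t(n,x),\Dim)=\Dim/2$ the answer depends on the signs of those cycle products, i.e.\ on the \emph{positions} of the $R$'s in $\omega$, not just their number; and for residues outside $\{0,1,\Dim-1\}$ even the unconditioned asymptotic density of ${\rm fix}_{\mathbb R}(n)$ versus ${\rm fix}_{\mathbb C}(n)$ is not determined anywhere (the paper only proves Lemmas~\ref{twist-lem} and \ref{c-eigenvalue}, covering residues $0$, $1$ and $\Dim-1$). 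So your argument requires a complete classification plus a prefix-equidistribution statement that is strictly more than what is available, and no argument for it is given.

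The paper avoids this combinatorics altogether. It proves that the coding $\pi$ makes $(F,m)$ isomorphic to the $(1/2,1/2)$-Bernoulli shift (by an entropy argument), so $\nu_n\to m$ weakly and $(F,m)$ is ergodic; it then writes $\nu_n$ as the convex combination \eqref{decomposition} of $\nu_{n,\mathbb R}$ and $\nu_{n,\mathbb C}$, uses Proposition~\ref{ratio-prop} (proved via the binomial multisection formula) to keep both weights bounded away from $0$, passes to weak* subsequential limits $\nu_{\mathbb R},\nu_{\mathbb C}$, and concludes from $m=\rho\nu_{\mathbb R}+(1-\rho)\nu_{\mathbb C}$ with $\rho\in(0,1)$ and the ergodicity (extremality) of $m$ that $\nu_{\mathbb R}=\nu_{\mathbb C}=m$. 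In other words, only crude lower bounds on the proportions are needed, not exact densities on cylinders. You could close your gap the same way: your $E_{\mathbb K}(n)$ construction is not needed once you observe that any weak* limit of $\nu_{n,\mathbb K}$ is $F$-invariant, that $\nu_n\to m$, that both coefficients in the decomposition have positive liminf, and that $m$ is ergodic; proving the cylinder-equidistribution of the real/complex classification directly would require substantially new combinatorial work that neither you nor the paper supplies.
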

  
One ingredient in
 a proof of Theorem~C is a uniform lower bound
 \[\liminf_{n\to\infty}
 \frac{\#{\rm fix}_{\mathbb K}(n)}{\#{\rm fix}(n)}>0,\]
which we prove 
using the formula for the
 multisection of a binomial expansion~\cite{weisstein04}.
We then show that
the coding into kneading sequences defines a continuous, measure-theoretic isomorphism between $(F,m)$ and the $(1/2,1/2)$-Bernoulli shift, for which the set of periodic points equidistribute with respect to the Bernoulli measure.

Using this isomorphism we further obtain the following result.
Recall that $(F,m)$ is {\bf mixing} 
if for all Borel subsets $A$ and $B$ of $X$ we have
\[\lim_{n\to\infty}
m(A\cap F^{-n}(B))= m(A) m(B).\]
We also say $F$ is mixing relative to $m$.
If $(F,m)$ is mixing, it is weak mixing and ergodic. 
The fat and skinny baker maps
\cite{alexander_1984,farmer_1983} are mixing relative to the natural measures,
and the heterochaos baker maps \cite{saiki21} are weak mixing relative to the Lebesgue measure.
\begin{theoremd} Let $F\colon X\to X$ be the twisted baker map. Then
$(F,m)$ is mixing.
\end{theoremd}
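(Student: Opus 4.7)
The strategy is to reduce Theorem~D to the standard fact that the $(1/2,1/2)$-Bernoulli shift is mixing, by invoking the measure-theoretic isomorphism between $(F,m)$ and this shift that was established in the course of proving Theorem~C. Set $\Sigma=\{L,R\}^{\mathbb N}$, let $\sigma\colon\Sigma\to\Sigma$ denote the left shift, and let $\mu$ denote the $(1/2,1/2)$-Bernoulli measure on $\Sigma$. From Theorem~C's proof, the coding map $\pi\colon X\to\Sigma$, $\pi(x)=(a_n(x))_{n=0}^{\infty}$, is a continuous measure-theoretic isomorphism intertwining $F$ with $\sigma$, with $\pi_*m=\mu$. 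Since mixing is an abstract invariant of measure-theoretic isomorphism, it suffices to show $(\sigma,\mu)$ is mixing.

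Next I would verify mixing of $(\sigma,\mu)$ on the algebra of cylinder sets. For cylinders $C=[a_0\cdots a_{k-1}]$ and $D=[b_0\cdots b_{\ell-1}]$ and any $n\geq k$, the product structure of $\mu$ yields
\[
\mu\bigl(C\cap\sigma^{-n}D\bigr)=\mu(C)\,\mu(D),
\]
so asymptotic independence holds exactly for cylinders, and by linearity for finite disjoint unions of cylinders.

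To pass from this algebra to all Borel subsets, I would use the standard approximation argument: given Borel sets $A,B\subset\Sigma$ and $\eps>0$, choose finite unions of cylinders $A',B'$ with $\mu(A\triangle A')<\eps$ and $\mu(B\triangle B')<\eps$; then
\[
\bigl|\mu(A\cap\sigma^{-n}B)-\mu(A)\mu(B)\bigr|\leq\bigl|\mu(A'\cap\sigma^{-n}B')-\mu(A')\mu(B')\bigr|+4\eps,
\]
and the first term vanishes for $n$ sufficiently large by the cylinder computation above. Transporting this back to $X$ via $\pi$ gives $m(A\cap F^{-n}B)\to m(A)m(B)$ for every pair of Borel sets.

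The genuine work here is the construction of the isomorphism $\pi$ in Theorem~C (measurable injectivity almost everywhere, and the identification $\pi_*m=\mu$ using that $F^n$ maps each basic $n$-rectangle bijectively onto $\operatorname{int}(X)$ and that Lebesgue-a.e.\ point lies in basic rectangles of arbitrarily small diameter). Once that isomorphism is in hand, Theorem~D is a pure transport argument and introduces no new difficulty; the main obstacle has already been absorbed into Theorem~C.
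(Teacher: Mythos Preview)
Your proposal is correct and takes essentially the same approach as the paper: establish a measure-theoretic isomorphism between $(F,m)$ and the $(1/2,1/2)$-Bernoulli shift via the kneading coding, then transport the (standard) mixing of the Bernoulli shift back to $(F,m)$. Two minor quibbles: in the paper $\Sigma$ denotes only those sequences in which $R$ appears infinitely often (not the full shift $\{L,R\}^{\mathbb N}$), and the coding map $x\mapsto(a_n(x))_{n\ge0}$ fails to be continuous on the discontinuity set $S$, though neither point affects the measure-theoretic argument.
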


The rest of this paper consists of two sections and one appendix. In Section~2
 we prove preliminary results, and in Section~3 prove all the theorems.
 In the appendix
  we comment on prime periodic points of $F$, and give an alternative proof of the ergodicity of $(F,m)$.
\section{Preliminaries}
In Sections~\ref{twist-sec} and \ref{basic} we introduce two main ingredients, twist number and basic rectangles respectively,
and prove fundamental results.
\subsection{Existence of real and complex conjugate eigenvalues}\label{twist-sec}

For an integer $n\geq1$ and
 $x\in X$
we define a {\bf twist number} $t(n,x)$ by
\[t(n,x)=\#\{0\leq k\leq n-1\colon a_k(x)=R\}.\]
 Note that $t(n,x)\geq1$
 if $x\in{\rm fix}(n)$.
The twist number of a hyperbolic repelling periodic point of $F$ is related to
the existence of complex conjugate eigenvalues.

Recall that $\Dim\geq2$ is an integer and $X=[-1,1]\times[0,1]^{\Dim-1}$.
By \eqref{T} and \eqref{B}, for $x\in X_R$ we have
\begin{equation}\label{twist-jacobian}JF(x)=
\begin{cases}
  \begin{pmatrix} 0 & -2 \\
 1& 0 
 \end{pmatrix}\quad\text{for }\Dim=2,\\
  \begin{pmatrix} 
 0 &   &   &                 & -2 \\
 1 & \ddots &   & \text{\huge{0}} &    \\
   & \ddots  & \ddots &            &    \\
   &   & \ddots  &  \ddots &                 \\
 \text{\huge{0}}&  &  & 1 & 0 \\ 
  \end{pmatrix}
\quad\text{for }\Dim\geq3. 
%  \allblack
 \end{cases}\end{equation}
By an integer multiple of $\Dim$ we mean any
product of a nonnegative integer and $\Dim$.
\begin{lemma}\label{diagonal}
Let $n\geq1$, $x\in X$ be such that $t(n,x)$ is an integer multiple of ${\rm M}$.
Then $JF^n(x)$ is a diagonal matrix. All diagonal elements have the same sign, which is positive or negative according as $t(n,x)/{\rm M}$ is even or odd.
\end{lemma}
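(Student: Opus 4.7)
The plan is to exploit the factorisation $A_R = DP$ of the Jacobian on $X_R$, where $D = \mathrm{diag}(-2, 1, \ldots, 1)$ and $P$ is the $\Dim\times\Dim$ cyclic permutation matrix determined by $Pe_i = e_{i+1}$ for $i<\Dim$ and $Pe_\Dim = e_1$. The matrix $P$ has order $\Dim$, and conjugation by $P$ sends diagonal matrices to diagonal matrices by cyclically shifting the diagonal entries. On $X_L$ the Jacobian $A_L = \mathrm{diag}(2,1,\ldots,1)$ is already diagonal.

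First I would establish by induction on $n$ the factorisation
\[
JF^n(x) \;=\; \Pi_n(x)\cdot P^{t(n,x)},
\]
where $\Pi_n(x)$ is a product of diagonal matrices, each obtained from $A_L$ or $D$ by conjugation by a nonnegative power of $P$. The step in the case $a_n(x)=R$ is handled by writing
\[
JF^{n+1}(x) \;=\; DP\cdot \Pi_n(x)\cdot P^{t(n,x)} \;=\; D\cdot\bigl(P\,\Pi_n(x)\,P^{-1}\bigr)\cdot P^{t(n,x)+1},
\]
and observing that $P\,\Pi_n(x)\,P^{-1}$ is again a product of diagonal matrices, because $P$-conjugation preserves diagonality. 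When $t(n,x)$ is a multiple of $\Dim$ we have $P^{t(n,x)} = I$, and so $JF^n(x) = \Pi_n(x)$ is diagonal, which yields the first assertion of the lemma.

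For the sign statement I would track how each $D$-factor accumulates conjugations by $P$ during the induction. The $j$-th occurrence of $R$ in $a_0(x),\ldots,a_{n-1}(x)$ inserts a fresh $D$ on the left and subsequently is conjugated by one additional power of $P$ for each later $R$; hence after all $n$ steps this $D$-factor equals $P^{t(n,x)-j}DP^{-(t(n,x)-j)}$, a diagonal matrix carrying $-2$ in position $1+\bigl((t(n,x)-j)\bmod \Dim\bigr)$ and $1$'s elsewhere. When $t(n,x) = q\Dim$, the integers $t(n,x)-j$ for $j=1,\ldots,t(n,x)$ cover each residue class modulo $\Dim$ exactly $q$ times, so each of the $\Dim$ diagonal positions of $\Pi_n(x)$ receives exactly $q$ factors of $-2$; the remaining contributions from the $A_L$-conjugates are positive. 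Therefore every diagonal entry of $\Pi_n(x)$ has the same sign $(-1)^q = (-1)^{t(n,x)/\Dim}$.

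The main obstacle I anticipate is the sign bookkeeping in this last step: one needs to verify carefully that each $D$-factor accumulates exactly the claimed number of $P$-conjugations, and then confirm that when $t(n,x)$ is divisible by $\Dim$ these $-2$'s distribute uniformly across the $\Dim$ diagonal positions. By contrast, once the factorisation $JF^n(x)=\Pi_n(x)\cdot P^{t(n,x)}$ is established, the diagonality assertion is immediate from $P^{\Dim}=I$.
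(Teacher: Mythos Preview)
Your proposal is correct. The factorisation $JF^n(x)=\Pi_n(x)\,P^{t(n,x)}$ with $\Pi_n(x)$ diagonal is valid (the unstated case $a_n(x)=L$ is trivial since $A_L$ is already diagonal), and your sign bookkeeping is right: the $j$-th $R$ contributes a factor $P^{t(n,x)-j}DP^{-(t(n,x)-j)}$ to $\Pi_n(x)$, and when $t(n,x)=q\Dim$ these exponents hit each residue class modulo $\Dim$ exactly $q$ times, so every diagonal slot carries exactly $q$ factors of $-2$.

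The paper's own proof is much terser and follows a slightly different route. It simply asserts, as a direct consequence of the explicit matrix \eqref{twist-jacobian}, that when $t(n,x)=\Dim$ the product $JF^n(x)$ is diagonal with all entries negative; the general case $t(n,x)=q\Dim$ then follows by cutting the orbit into $q$ consecutive blocks each of twist number $\Dim$ and multiplying the resulting $q$ diagonal matrices with all-negative entries. In effect the paper treats the case $t=\Dim$ as a black-box computation and iterates it, whereas you build a factorisation valid for every $n$ and read off both conclusions at once. Your approach costs a bit more notation but makes the mechanism (the cyclic permutation $P$ shuffling the single $-2$ of $D$ through all $\Dim$ positions) completely transparent; the paper's argument is quicker to write down but leaves that verification to the reader.
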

\begin{proof}
From \eqref{twist-jacobian}, if $t(n,x)=\Dim$ then $JF^n(x)$ is a diagonal matrix whose diagonal elements are all negative, which implies
the last assertion.
\end{proof}

\begin{lemma}\label{twist-lem}
Let $n\geq1$ and
 $x\in{\rm fix}(n)$ be such that $t(n,x)$ is an integer multiple of ${\rm M}$.
Then $x\in{\rm fix}_{\mathbb R}(n)$.
\end{lemma}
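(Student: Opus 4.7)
The plan is to derive Lemma~\ref{twist-lem} as an immediate corollary of Lemma~\ref{diagonal}. I would proceed in three short steps. First, I invoke Lemma~\ref{diagonal} using the hypothesis that $t(n,x)$ is an integer multiple of $\Dim$ to conclude that $JF^n(x)$ is a diagonal matrix. Second, I observe that the spectrum of a diagonal matrix is precisely the multiset of its diagonal entries, each of which is a real number (and by Lemma~\ref{diagonal} these entries share a common sign, a stronger fact that is not needed here). Third, I combine ${\rm spec}(JF^n(x))\subset\mathbb R$ with the standing hypothesis $x\in{\rm fix}(n)$ to conclude, directly from the definition of ${\rm fix}_{\mathbb R}(n)$, that $x\in{\rm fix}_{\mathbb R}(n)$.

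There is no genuine obstacle once Lemma~\ref{diagonal} is available; Lemma~\ref{twist-lem} is essentially a bookkeeping consequence. The only subsidiary point worth flagging is that the Jacobian $JF^n(x)$ is indeed well-defined at $x$: since $x\in{\rm fix}(n)$ we have $x\notin N$, and one-sided derivatives are used as needed on points of the discontinuity set $S$, consistent with the conventions fixed just before the statement.

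If pressed on \emph{why} Lemma~\ref{diagonal} itself extends from the case $t(n,x)=\Dim$ explicitly treated in its proof to every integer multiple $k\Dim$, I would point to two structural identities for the Jacobian building blocks in \eqref{twist-jacobian}: the $R$-type factor $A_R$ satisfies $A_R^{\Dim}=-2I$, and conjugation of the diagonal $L$-type factor $A_L$ by $A_R$ again yields a diagonal matrix (with entries cyclically permuted). Consequently, in the product $JF^n(x)$, each $A_R$ can be commuted past an $A_L$ at the cost of replacing that $A_L$ by another diagonal matrix; pushing all $k\Dim$ copies of $A_R$ to one end produces $(-2)^k$ times a product of diagonal matrices, which is manifestly diagonal, and Lemma~\ref{twist-lem} follows in one line.
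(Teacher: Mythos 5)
Your proposal is correct and matches the paper, whose proof of this lemma is literally ``Follows from Lemma~\ref{diagonal}'': once $JF^n(x)$ is diagonal its spectrum is real, so $x\in{\rm fix}_{\mathbb R}(n)$ by definition. The extra remarks on one-sided derivatives and on why Lemma~\ref{diagonal} extends to all multiples of $\Dim$ are fine but not needed for this statement.
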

\begin{proof}
Follows from Lemma~\ref{diagonal}.
\end{proof}

The next lemma allows us to find periodic points with complex conjugate eigenvalues.

\begin{lemma}\label{c-eigenvalue}
Let ${\rm M}\geq2$, and let $n\geq1$,
 $x\in{\rm fix}(n)$ satisfy $t(n,x)=q{\rm M}+r$
for an integer $q\geq0$ and $r\in\{1,{\rm M}-1\}$. Then
$x\in{\rm fix}_{\mathbb C}(n)$.
\end{lemma}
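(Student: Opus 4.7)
The plan is to compute $JF^n(x)$ in closed form from the kneading sequence and read off the spectrum. Write $D={\rm diag}(2,1,\ldots,1)$ for the Jacobian on $X_L$ and $P$ for the Jacobian on $X_R$ from \eqref{twist-jacobian}, so that $Pe_j=e_{j+1}$ for $j<\Dim$, $Pe_\Dim=-2e_1$, and in particular $P^\Dim=-2I$. Since replacing $x$ by $Fx$ cyclically permutes the kneading word and only conjugates $JF^n(x)$, I may assume $a_0(x)=R$ and write the kneading as $RL^{c_1}RL^{c_2}\cdots RL^{c_k}$ with $k=t(n,x)=q\Dim+r$ and $c_1+\cdots+c_k=n-k$, yielding
\[JF^n(x)=D^{c_k}\,P\,D^{c_{k-1}}\,P\cdots D^{c_1}\,P.\]

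The core algebraic move is the commutation identity $PD=D^{(1)}P$, where $D^{(s)}$ denotes the diagonal matrix obtained from $D$ by cyclically shifting its diagonal entries by $s$ positions; more generally $P^sDP^{-s}=D^{(s)}$. Iteratively pushing each $D^{c_i}$ past the $P$'s to the leftmost position collapses the product to $JF^n(x)=E\cdot P^k$ with $E$ diagonal and $\det E=2^{n-k}$. Invoking $P^\Dim=-2I$ then gives $A:=JF^n(x)=(-2)^q E\,P^r$. Because $r\in\{1,\Dim-1\}$ is coprime to $\Dim$, the map $P^r$ permutes $\{e_1,\ldots,e_\Dim\}$ as a single $\Dim$-cycle up to nonzero scalar factors, and so does $A$. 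Multiplying the scalar factors around this cycle yields $A^\Dim=cI$ for some $c\in\R$, and a short bookkeeping gives $|c|=2^n$ with $c=(-1)^{q\Dim+1}2^n$ when $r=1$ (and an analogous formula when $r=\Dim-1$).

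Because $A$ acts cyclically on the basis, the vectors $e_1,Ae_1,\ldots,A^{\Dim-1}e_1$ lie on $\Dim$ distinct coordinate axes and are therefore linearly independent, which forces the minimal polynomial of $A$ to be $x^\Dim-c$. This polynomial has $\Dim$ distinct roots, so $A$ is diagonalizable and ${\rm spec}(A)$ equals the full set of $\Dim$-th roots of $c$. For nonzero $c\in\R$ and $\Dim\geq 3$ at most two of the $\Dim$ roots are real, so a non-real eigenvalue is automatic. The only delicate case is $\Dim=2$, where $r\in\{1,\Dim-1\}$ collapses to $r=1$ and $q\Dim$ is even, so the sign formula gives $c=-2^n<0$ and the two eigenvalues are $\pm i\sqrt{2^n}$, still non-real. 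I expect the principal bookkeeping obstacle to be tracking the scalar $c$ precisely through the commutation rewrite, in particular pinning down the sign for $\Dim=2$ so that $c$ lands in the $c<0$ regime.
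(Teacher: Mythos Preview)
Your argument is correct and tracks the paper's proof closely: both show that $JF^n(x)$ has the form $E\cdot P^{r}$ (up to the scalar $(-2)^q$) with $E$ diagonal---the paper reaches this via Lemma~\ref{diagonal} and records it as the block form $\left(\begin{smallmatrix}O&A\\B&O\end{smallmatrix}\right)$, while you derive it from the explicit commutation rule $PD=D^{(1)}P$---and then read off the characteristic polynomial $\lambda^{\Dim}-c$. The only cosmetic differences are that the paper handles $\Dim=2$ by a separate $2\times2$ computation whereas you fold it into the general case via the sign formula $c=(-1)^{q\Dim+1}2^n<0$, and that your cyclic shift to $a_0(x)=R$ is in fact unnecessary (the identity $JF^n(x)=EP^{k}$ already holds for every arrangement of the word) but harmless.
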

\begin{proof}
Let $M=2$. If $n=1$, then the desired conclusion
follows from a direct calculation
of the eigenvalues of the Jacobian matrix 
in \eqref{twist-jacobian}.
Let $n\ge 2$.
By periodicity, we may assume $a_{n-1}(x)=R$
with no loss of generality.
Then $t(n-1,x)$ is even.
By Lemma~\ref{diagonal} we have
\[JF^{n-1}(x)=\begin{pmatrix}
\alpha & 0\\
0& \beta
\end{pmatrix},\ \alpha\beta>0.\]
By $a_{n-1}(x)=R$ and \eqref{twist-jacobian},
\[JF(F^{n-1}(x))=\begin{pmatrix}
0 & -2\\
1& 0
\end{pmatrix}.\]
Hence
\[JF^n(x)=JF(F^{n-1}(x))JF^{n-1}(x)=\begin{pmatrix}
0 & -2\beta\\
\alpha & 0
\end{pmatrix}.\]
Therefore, ${\rm spec}(JF^n(x))=\{\pm\sqrt{-2\alpha\beta}\}$ and
$x\in{\rm fix}_{\mathbb C}(n)$.

Let $\Dim\geq3$.
From Lemma~\ref{diagonal} and
 \eqref{twist-jacobian},
we have
\[JF^n(x)=\begin{pmatrix}
O & A\\
B& O
\end{pmatrix},\]
where $A$ and $B$ are nonsingular diagonal matrices of size $r$ and $\Dim -r$  respectively.
 Since $r\in\{1,\Dim-1\}$,
the characteristic polynomial $g(\lambda)$ of $JF^n(x)$ is of the form $g(\lambda)=\lambda^\Dim-\alpha$, $\alpha\in\mathbb R\setminus\{0\}$.
 Since $\Dim\geq3$, the
 equation $g(\lambda)=0$ has a solution in $\mathbb C\setminus\mathbb R$. 
 Hence we obtain $x\in{\rm fix}_{\mathbb C}(n)$.\end{proof}

\subsection{Basic rectangles}\label{basic}
Let $n\geq1$ be an integer.
Let $\{L,R\}^n$ denote the set of words of $\{L,R\}$ with word length $n$, namely
\[\{L,R\}^n=\{a_0\cdots a_{n-1}\colon a_k\in\{L,R\}\text{ for all }0\leq k\leq n-1\}.\] 
For each $a_0\cdots a_{n-1}\in\{L,R\}^n$,
define
\[[a_0\cdots a_{n-1}]=\bigcap_{k=0}^{n-1}F^{-k}(X_{a_k}),\]
We call a subset of $X$ of this form a
{\bf\BF basic $n$-rectangle} or simply a {\bf\BF basic rectangle}.
Any basic $n$-rectangle is a Cartesian product of nondegenerate intervals, and mapped by 
$F^n$ affinely onto a rectangle that contains ${\rm int}(X)$. See Figure~\ref{fig:2D} for examples of basic rectangles.
Below we list properties of basic rectangles:
\begin{itemize}
\item[(P1)] $[a_0\cdots a_{n-1}]=[a_0\cdots a_{n-1}L]\cup[a_0\cdots a_{n-1}R]$ for all $n\geq1$ and 
$a_0\cdots a_{n-1}\in \{L,R\}^n$.

\item[(P2)] ${\rm int}(F^{n}([a_0\cdots a_{\ell-1}]))={\rm int}([a_n\cdots a_{\ell-1}])$ for all $n\geq0$, $\ell\geq n+1$ and
$a_0\cdots a_{\ell-1}\in \{L,R\}^\ell$.

\item[(P3)] Two basic rectangles are nested or disjoint.

\item[(P4)]
$m([a_0\cdots a_{n-1}])=2^{-n}$
for all $n\geq1$ and $a_0\cdots a_{n-1}\in \{L,R\}^n$.
\end{itemize}

\begin{figure}[tb]
\begin{minipage}[b]{0.475\linewidth}
    \centering
    \includegraphics[width=0.82\textwidth,height=0.45\textwidth]{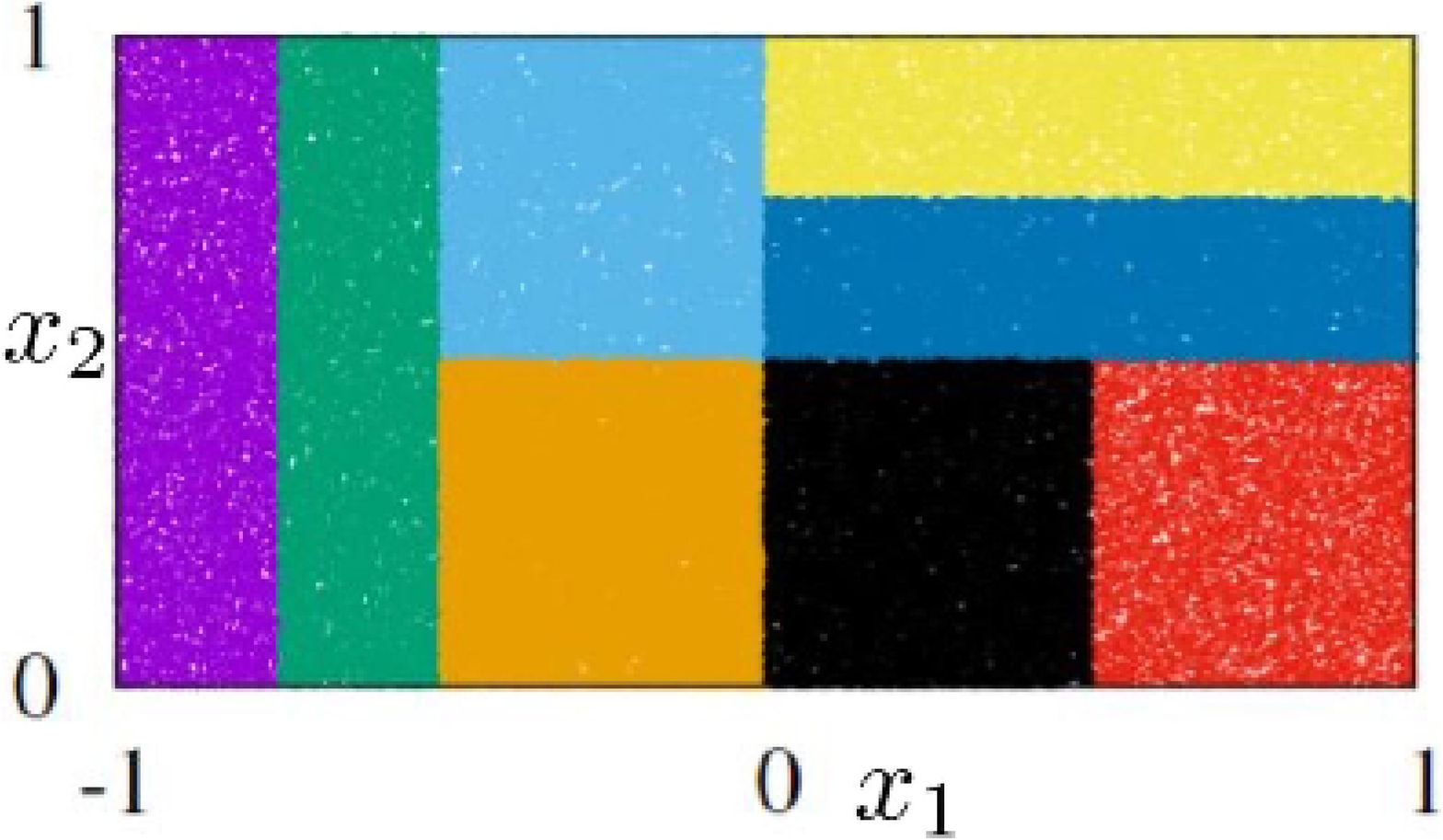}
    %{fig3a-2.JPG}
    \subcaption{$n=3$}
  \end{minipage}
  \begin{minipage}[b]{0.475\linewidth}
    \centering
    \includegraphics[width=0.82\textwidth,height=0.45\textwidth]{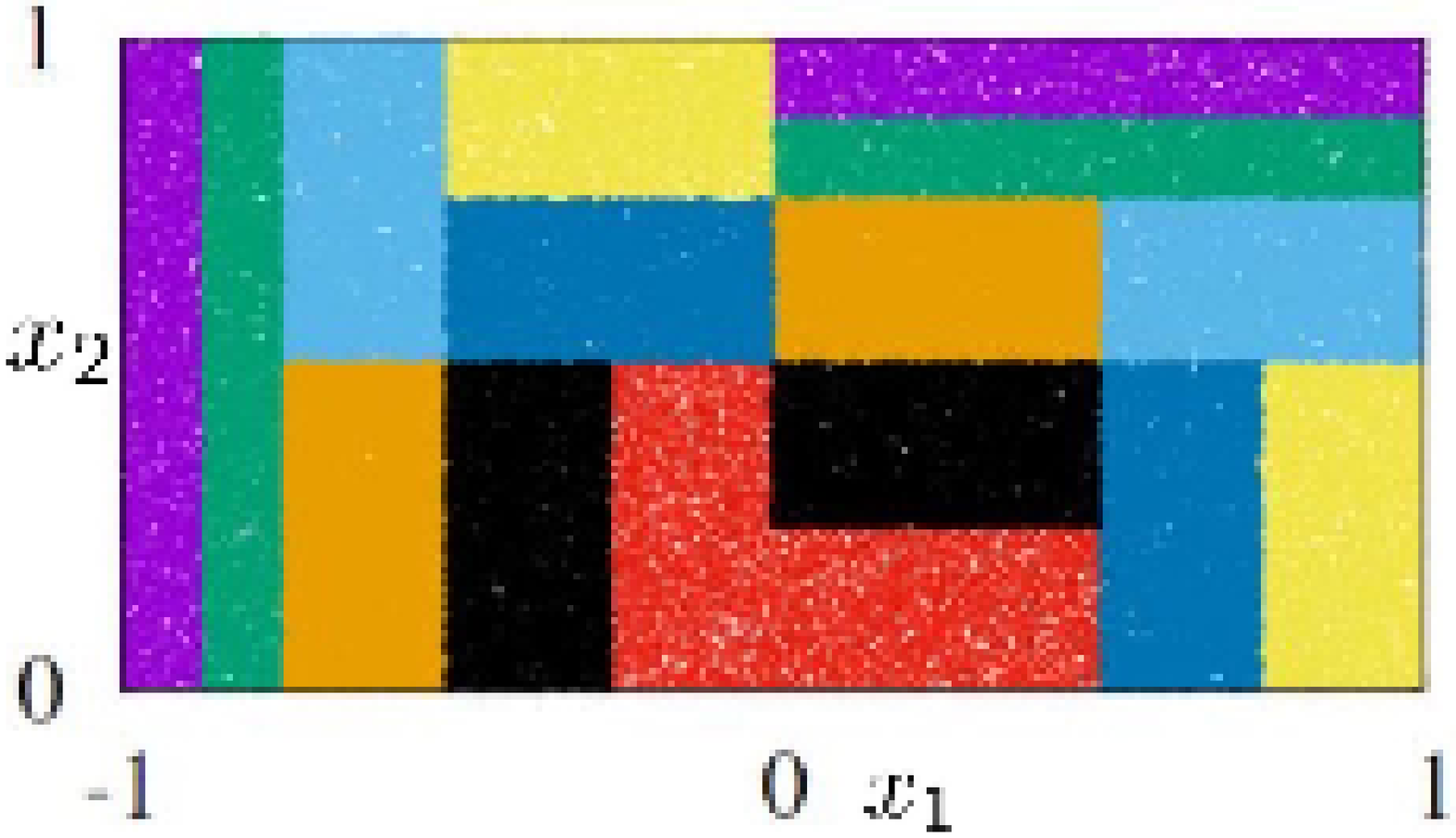}
    %{fig3b-2.JPG}
    \subcaption{$n=4$}
  \end{minipage}
  \begin{minipage}[b]{0.475\linewidth}
    \centering
    \includegraphics[width=0.82\textwidth,height=0.45\textwidth]{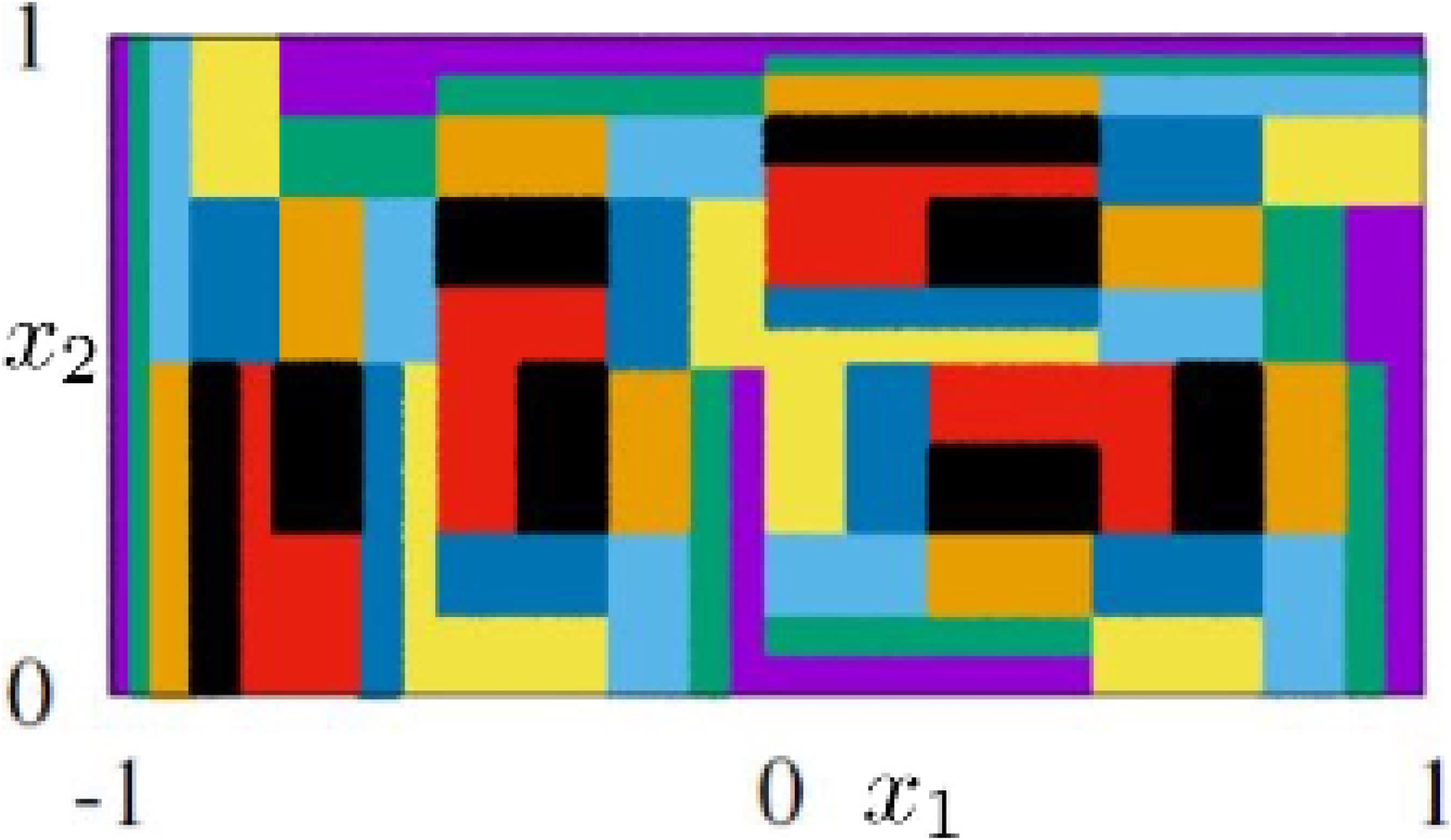}
    %{fig3c-2.JPG}
    \subcaption{$n=6$}
  \end{minipage}
  \begin{minipage}[b]{0.475\linewidth}
    \centering
    \includegraphics[width=0.82\textwidth,height=0.45\textwidth]{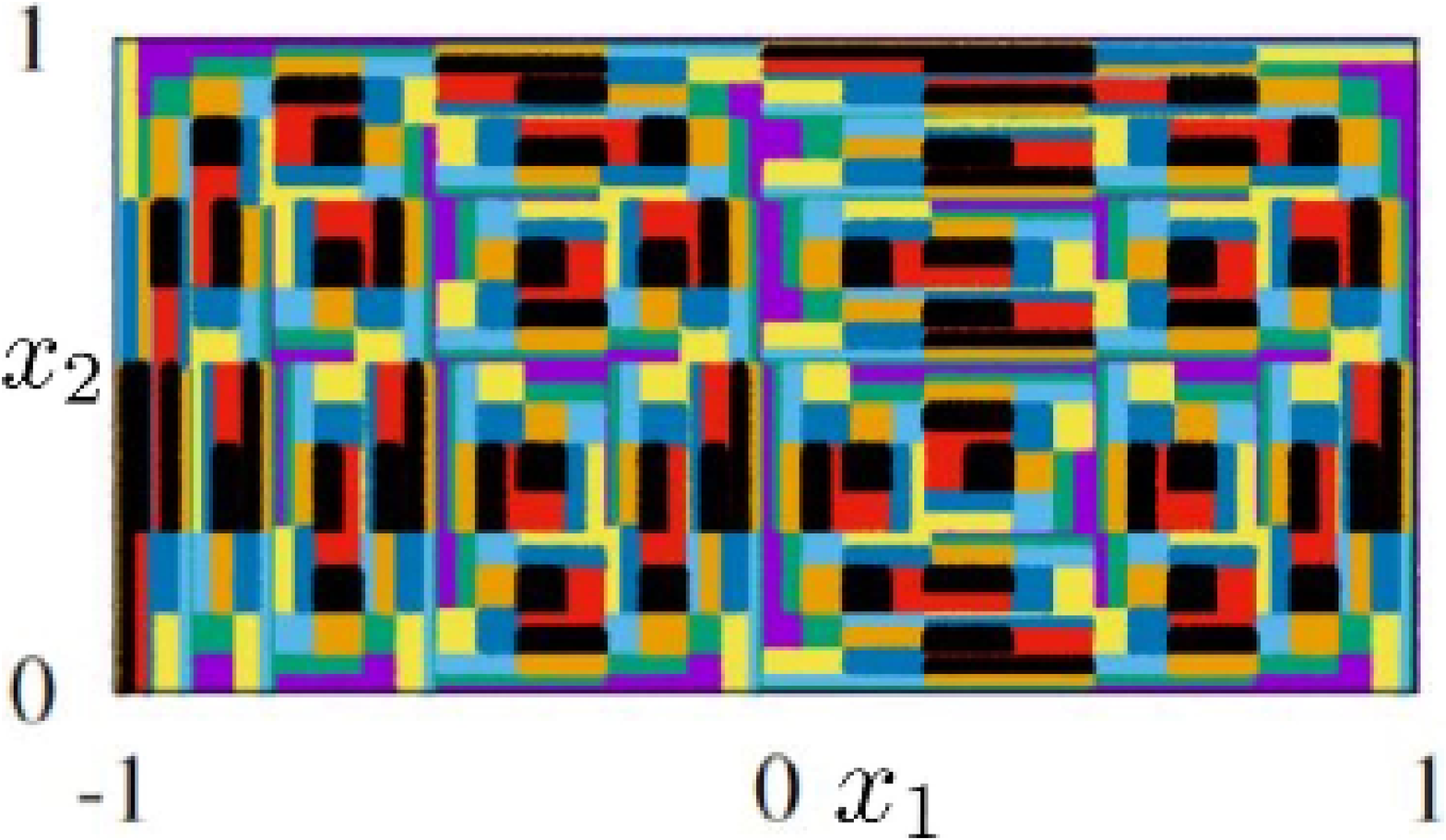}
    %{fig3d-2.JPG}
    \subcaption{$n=9$}
  \end{minipage}
  \medskip
  \begin{minipage}[b]{0.25\linewidth}
    \centering
    \includegraphics[width=0.35\textwidth,height=0.55\textwidth]{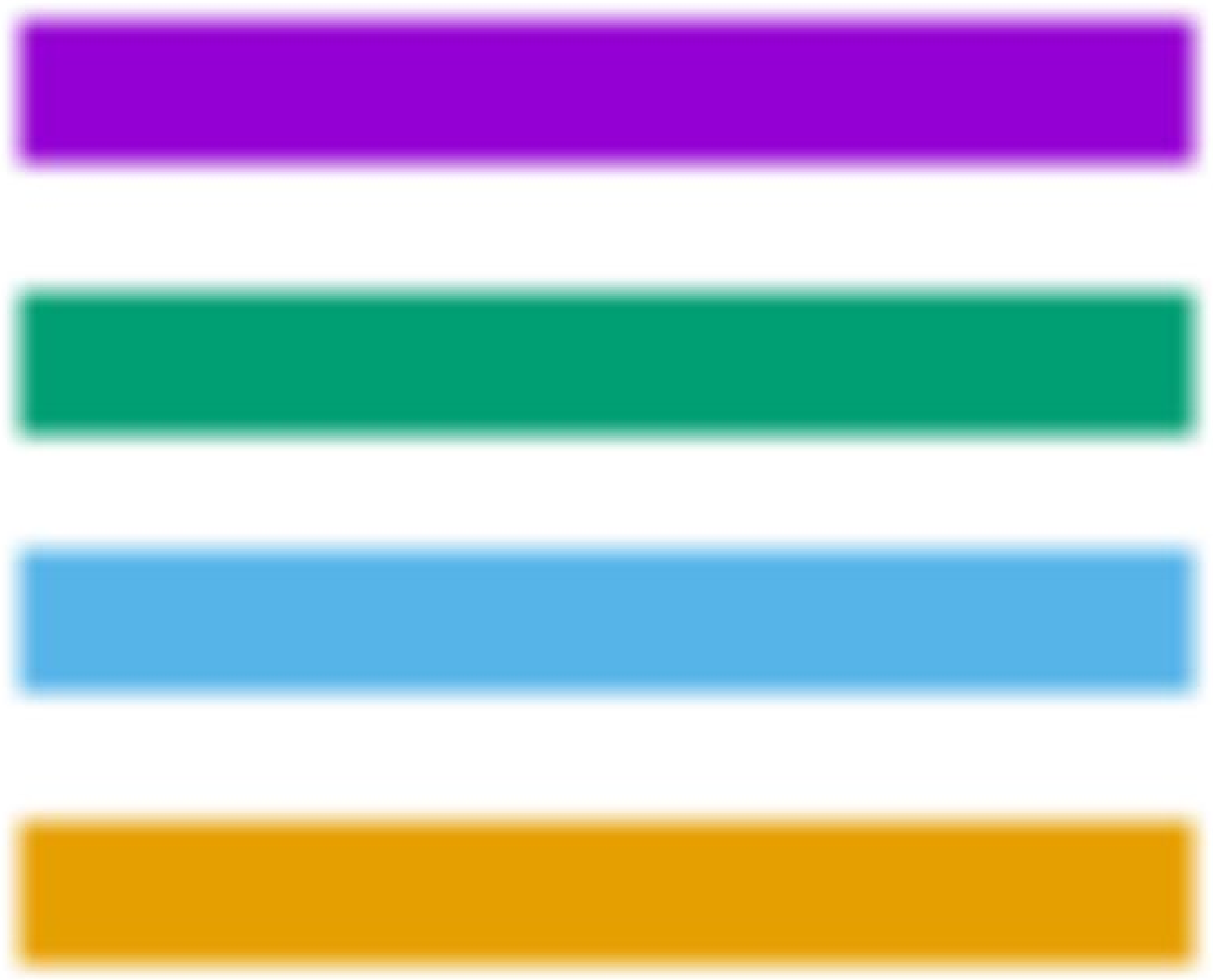}
    %{colors1.JPG}
    \includegraphics[width=0.25\textwidth,height=0.53\textwidth]{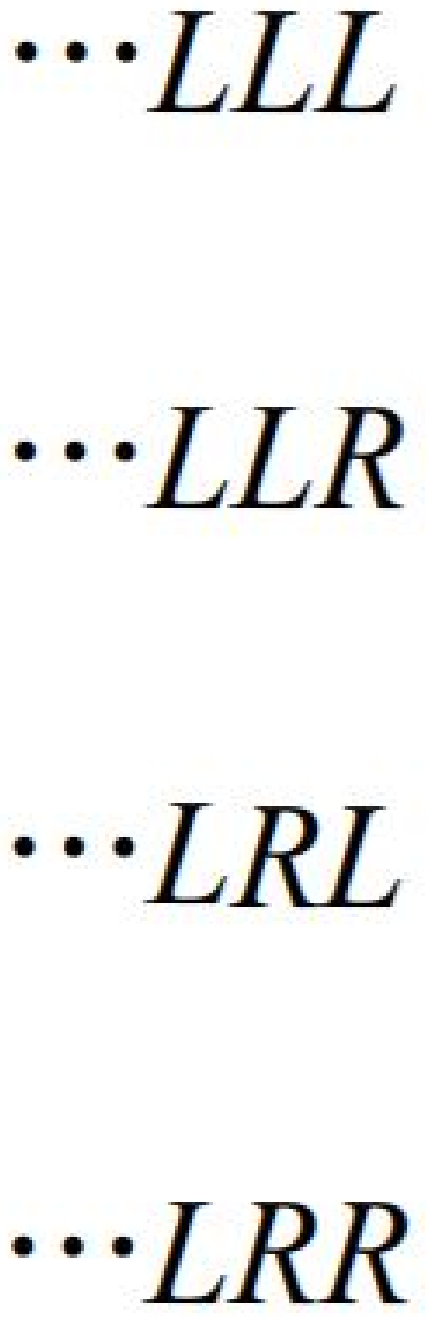}
    %{colorlabel01.JPG}
  \end{minipage}
  \begin{minipage}[b]{0.25\linewidth}
    \centering
\includegraphics[width=0.35\textwidth,height=0.55\textwidth]{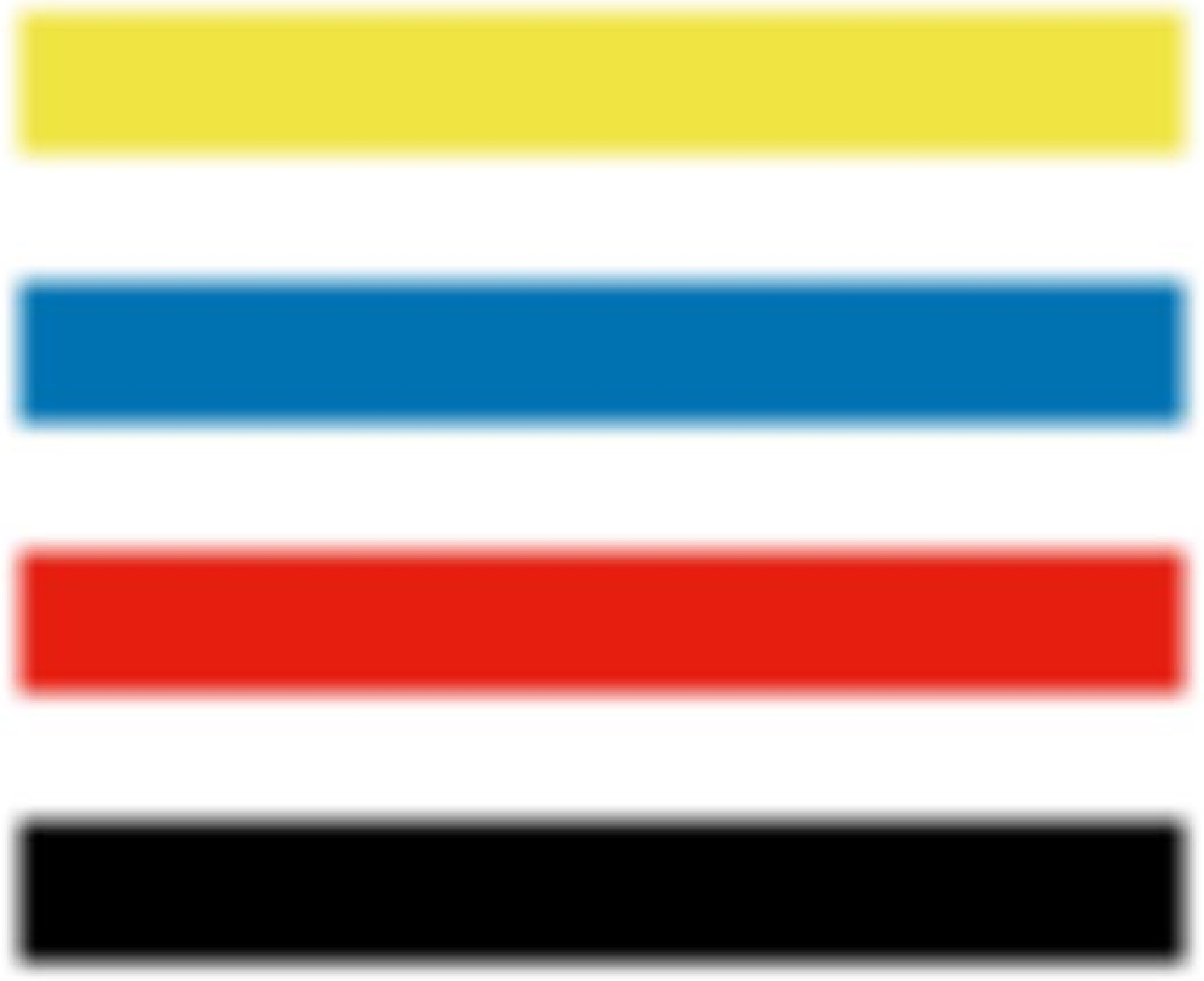}
%{colors2.JPG}
    \includegraphics[width=0.25\textwidth,height=0.54\textwidth]{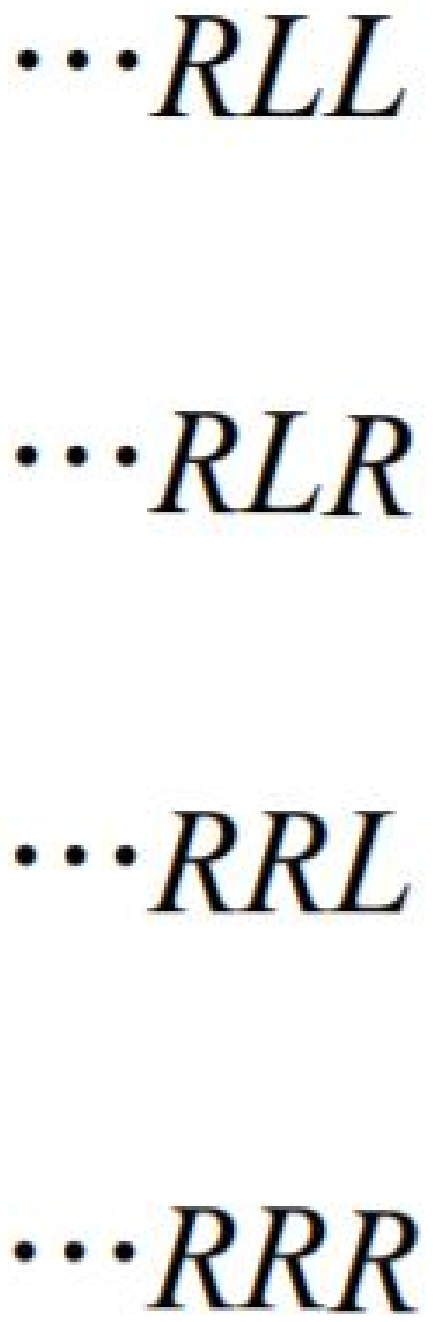}
    %{colorlabel02.JPG}
  \end{minipage}
    \caption{
    {\bf Basic rectangles for the twisted baker map with $\Dim=2$.} 
    For $n=3,4,6,9$, basic $n$-rectangles with the same last three words have the same color, according to the coloring scheme shown above.}
    \label{fig:2D}
\end{figure}

Let $\{L,R\}^\infty$ denote the set 
of one-sided infinite sequences of $L$ and $R$.
 Let $\Sigma$ denote the set of elements of $\{L,R\}^\infty$ in
 which $R$ appears infinitely many times. 

\begin{lemma}\label{q-lemma}
For any $(a_n)_{n=0}^\infty\in \Sigma$ the following hold:

\begin{itemize}
\item[(a)] 
 $\bigcap_{n=1}^\infty [a_0\cdots a_{n-1}]$ is a singleton.
\item[(b)] 
For all sufficiently large $n\geq1$, 
there is an open subset of $X$ that 
  is compactly contained in ${\rm int}([a_0\cdots a_{n-1}])$ and contains a hyperbolic repelling periodic point of period $n$.
\end{itemize}
\end{lemma}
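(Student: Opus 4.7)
For (a), the plan is to exploit that the rectangles $[a_0\cdots a_{n-1}]$ are compact and nested by (P3), so the intersection is automatically nonempty, and I only need to show that the diameters tend to zero. From \eqref{twist-jacobian} and \eqref{B}, the Jacobian $JF^n$ on any basic $n$-rectangle is a product of the monomial matrices $D_L=\mathrm{diag}(2,1,\ldots,1)$ and $D_R$ (a signed cyclic permutation with a $-2$ entry), hence itself signed monomial, and its underlying permutation is the $r_n$-fold cyclic shift with $r_n:=t(n,\cdot)\bmod \Dim$. Since $F^n$ maps $[a_0\cdots a_{n-1}]$ affinely onto $X$ and $[a_0\cdots a_n]$ affinely onto $X_{a_n}$, appending $a_n$ halves exactly one side of the rectangle, namely the coordinate direction that $JF^n$ carries onto the $x_1$-axis; this direction is determined by $r_n$. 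Because $R$ appears infinitely often in $(a_n)_{n=0}^\infty$, the twist number $t(n,\cdot)$ tends to $\infty$, so $r_n$ visits every residue in $\{0,1,\ldots,\Dim-1\}$ infinitely often; hence each of the $\Dim$ sides is halved infinitely often, every side length tends to zero, and the intersection is a singleton.

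For (b), I take $n$ large enough that $a_0\cdots a_{n-1}$ contains at least one $R$. The periodic extension $(a_0\cdots a_{n-1})^\infty$ then also lies in $\Sigma$, so (a) produces a unique point $p_n$ with this kneading sequence, which is automatically a fixed point of $F^n$. I verify $p_n\in\mathrm{int}(X)$ by a ``boundary propagation'' argument: if some $F^k(p_n)$ had a boundary coordinate (some $x_j\in\{0,1\}$ for $j\geq 2$, or $x_1\in\{-1,1\}$), inspection of $F|_{X_L}$ and $F|_{X_R}$ shows this boundary value is shifted to the next coordinate under an $R$-step and transformed by the tent map $\tau$ whenever it reaches $x_\Dim$; tracking the finitely many possibilities shows the orbit is eventually trapped in the absorbing fixed set $N=\{x_1=-1\}$, contradicting the presence of infinitely many $R$'s in the kneading of $p_n$. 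Since $F^n$ is an affine bijection from $[a_0\cdots a_{n-1}]$ onto $X$ sending boundary to boundary, $p_n\notin\partial X$ gives $p_n\in\mathrm{int}([a_0\cdots a_{n-1}])$, and any small enough open ball around $p_n$ is then compactly contained there.

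Finally, to show $p_n$ is hyperbolic repelling, I decompose the permutation underlying $JF^n(p_n)$ into its $d=\gcd(r_n,\Dim)$ cycles of length $\Dim/d$; on the invariant subspace of a cycle $C$ all eigenvalues share the modulus $2^{S_C/|C|}$, where $S_C\geq 0$ is the total exponent of $2$ appearing in the entries along $C$. The whole task reduces to proving $S_C\geq 1$ for every cycle $C$, and this combinatorial step is the main obstacle I anticipate. The idea is to index each ``stretching event''---an $L$-step whose tracked direction is $1$, or an $R$-step whose tracked direction is $\Dim$---by the residue class of $t(k)\bmod d$ to which it contributes, and then to show that $S_C=0$ forces the sequence $t(k)\bmod d$ either to stay at $0$ forever (no $R$'s, contradicting our assumption) or to become stuck at a nonzero residue modulo $d$, which is incompatible with the endpoint condition $t(n)\equiv 0\pmod d$ built into the definition of $d$. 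Hence every cycle sum is at least one, every eigenvalue has modulus greater than $1$, and $p_n$ is hyperbolic repelling.
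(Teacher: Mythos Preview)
Your argument for (a) is correct and essentially parallels the paper's: both show every side length of $[a_0\cdots a_{n-1}]$ tends to zero. The paper groups the iterates into blocks delimited by every $\Dim$-th occurrence of $R$ and checks that each side halves across such a block; you track at each step which single side halves and observe that all residues $r_n$ recur infinitely often. These are two bookkeepings of the same phenomenon.

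For (b), your route is correct but considerably more elaborate than the paper's. The paper simply notes that once all side lengths $|I_j(0,n)|<1$ (which holds for large $n$ by (a)), the affine map $F^n|_{[a_0\cdots a_{n-1}]}$ takes a box with all sides $<1$ onto one with all sides $\geq1$; since $JF^n$ is monomial, every diagonal-of-the-permutation entry then has modulus $>1$, so the inverse is a strict contraction of $X$ into ${\rm int}[a_0\cdots a_{n-1}]$, giving both the interior fixed point and hyperbolicity in one stroke. Your approach---constructing $p_n$ via (a), excluding $\partial X$ by boundary propagation, and then analyzing cycle products---yields the same conclusion and has the mild bonus of working for every $n$ with at least one $R$ rather than only sufficiently large $n$ (useful for Theorem~A(a)).

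Regarding your ``main obstacle'': it dissolves once you notice that at step $k$ the cycle receiving a factor of $2$ is always $C_{(1-t(k+1))\bmod d}$, regardless of whether $a_k=L$ or $a_k=R$ (check both cases using $t(k+1)=t(k)+[a_k=R]$). Since $t(k+1)$ for $k=0,\ldots,n-1$ runs from $t(1)$ up to $t(n)\geq d$ in unit steps, it visits every residue modulo $d$, so $S_C\geq1$ for every cycle $C$. This replaces the contradiction sketch you outlined with a two-line direct argument.
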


\begin{proof}
For an interval $I\subset[-1,1]$
let $|I|$ denote its Euclidean length.
For $0\leq k\leq n$ write
\[F^k([a_0\cdots a_{n-1}])=I_1(k,n)\times I_2(k,n)\times\cdots\times I_{\Dim}(k,n).\]
All $I_j(k,n)$ are nondegenerate intervals.
Below we show that $|I_j(0,n)|\to0$ as $n\to\infty$ for all $1\leq j\leq\Dim$. This yields (a). Then,
for all sufficiently large $n\geq1$ such that $|I_j(0,n)|<1$ for $1\leq j\leq\Dim$,
the restriction of $F^n$ to $[a_0\cdots a_{n-1}]$
expands in all directions.  
Hence there is an open subset $U$ of $X$ that is compactly contained in ${\rm int}([a_0\cdots a_{n-1}])$ and contains
a fixed point of $F^n$, which is a hyperbolic repelling periodic point of $F$, verifying (b).

Define a strictly increasing sequence $(n_i)_{i=1}^\infty$ of integers by
$n_1=0$ and
 \[n_{i+1}=\min\{n>n_i\colon \#\{n_i< k\leq n\colon a_k=R\}=\Dim\},\]
for $i=1,2,\ldots$ inductively.
Since ${\rm int}(X)\subset F^{n_i}([a_0\cdots a_{n_i-1}])$,
we have
$|I_1(n_i,n_i)|=2$ and
$|I_j(n_i,n_i)|=1$.
We have
$|I_1(n_i,n_{i+1})|\leq1$ and
$|I_j(n_i,n_{i+1})|\leq 1/2$ for $2\leq j\leq\Dim$.
Then \[\begin{split}\frac{|I_j(0,n_{i+1} )|}{|I_{j}(0,n_i)|}=\frac{|I_j(n_i,n_{i+1})|}{|I_j(n_i,n_i)|}\leq\frac{1}{2}.\end{split}\]
For the first equality we have used (P2) and the fact that $F^{n_i}|_{[a_0\cdots a_{n_i-1}]}$ is affine.
It follows that $|I_j(0,n)|\to0$ as $n\to\infty$.
\end{proof}

\begin{lemma}\label{lem-S}
There is no periodic point of $F$ in $S=\{(x_1,\ldots,x_\Dim)\in X\colon x_1=0\}$.
\end{lemma}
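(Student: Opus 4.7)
I would argue by contradiction. Suppose $x\in S$ is a periodic point of period $n\geq 1$; since $S=\{x_1=0\}$ lies inside $X_R$ by convention, $x\in X_R$, so $a_0(x)=R$. Periodicity then guarantees that the symbol $R$ appears at least once in every block of $n$ consecutive entries of the kneading sequence of $x$.

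The plan is to track a single distinguished coordinate value through the iterates. From the explicit formulas, an $L$-step preserves each of the coordinates $x_2,\ldots,x_\Dim$; an $R$-step moves the value at position $j$ to position $j+1$ for $j=1,\ldots,\Dim-1$ and feeds the value at position $\Dim$ through $\tau$ into the new first coordinate. Applying $F$ to $x$ therefore places a $0$ at position $2$ of $F(x)$. Following this zero: $L$-steps leave its position unchanged, $R$-steps advance its position by one while it still sits at a position $\leq \Dim-1$, and the first $R$-step that occurs while it sits at position $\Dim$ turns the new first coordinate into $\tau(0)=1$. Since the kneading sequence contains infinitely many $R$s, this conversion occurs within a bounded number of iterations.

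After that conversion, the iterate lies again in $X_R$ (because $1\geq 0$) and carries a $1$ at position $1$, so exactly the same mechanism applies to this new value: successive $R$-steps move it through positions $2,3,\ldots,\Dim$, and the next $R$-step after it arrives at position $\Dim$ replaces the first coordinate by $\tau(1)=-1$, placing the iterate in $N$. In total, the orbit enters $N$ after at most $2\Dim$ occurrences of $R$, and hence within at most $2\Dim\, n$ iterations. But every point of $N$ is a fixed point of $F$, so the orbit remains in $N$ thereafter; periodicity of $x$ then forces $x=F^{jn}(x)\in N$ for large $j$, contradicting $x_1=0\neq -1$.

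The argument is essentially direct. The one place to check carefully is the bookkeeping for the two propagation stages: whenever the tracked value sits at position $\Dim$ during one or more $L$-steps, it must genuinely wait at that position until the next $R$-step occurs, so that the conversions $\tau(0)=1$ and $\tau(1)=-1$ really do fire. This is immediate from the transition rules above together with $a_0(x)=R$ and the periodicity of the kneading sequence; I expect this to be the only slight subtlety in writing the proof out in detail.
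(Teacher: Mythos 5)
Your argument is correct, but it is genuinely different from the paper's. The paper deduces the lemma from Lemma~\ref{q-lemma}(b): since a periodic $x\in S$ has infinitely many $R$'s in its kneading sequence, the basic rectangle $[a_0(x)\cdots a_{n-1}(x)]$ contains a hyperbolic repelling fixed point $y$ of $F^n$ in its interior, while $x$ would be a second fixed point of the same affine expanding branch sitting on the boundary of that rectangle --- impossible because an affine map with all eigenvalues outside the unit circle has a unique fixed point. Your proof instead tracks coordinates explicitly: starting from $x_1=0$ the value $0$ is shifted rightward by $R$-steps and frozen by $L$-steps until it is fed through $\tau$, producing $\tau(0)=1$ in the first coordinate, and one more sweep produces $\tau(1)=-1$, so the orbit lands in $N$, whose points are fixed; periodicity then forces $x\in N$, contradicting $x_1=0$. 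The bookkeeping you flag is indeed sound: positions $2,\dots,\Dim$ are untouched by $L$-steps, the tracked value only moves rightward so it is never overwritten before conversion, and the periodic kneading sequence with $a_0(x)=R$ supplies the $R$-steps needed (at most $2\Dim$ of them, hence within $2\Dim n$ iterates). The trade-off: the paper's route is a one-liner once the machinery of basic rectangles and Lemma~\ref{q-lemma} is in place and stays entirely within its symbolic framework, whereas yours is elementary and self-contained, using only the explicit formulas for $F$ and no expansion estimates, and it additionally exhibits the concrete mechanism (absorption into the fixed-point set $N$) that excludes periodicity on $S$.
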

\begin{proof}
Suppose $x\in S$ is a periodic point of $F$. Since $x\in X_R$,
 $a_n(x)=R$ holds for infinitely many $n\geq0$, namely
 $(a_n(x))_{n=0}^\infty\in \Sigma$. By Lemma~\ref{q-lemma}(b), if $n$ is  sufficiently large and $F^n(x)=x$, 
there is an open subset of $X$ that 
  is compactly contained in   ${\rm int}([a_0(x)\cdots a_{n-1}(x)])$ and contains a hyperbolic repelling periodic point $y$ of period $n$.
  Since $x\in S$, $x$ is in the boundary of $[a_0(x)\cdots a_{n-1}(x)]$.
  Hence $x\neq y$, whereas $F^n(x)=x$ and $F^n(y)=y$. This cannot happen, since  $F^n|_{[a_0(x)\cdots a_{n-1}(x)]}$ is affine and $y$ is hyperbolic repelling.
 \end{proof}

We endow
 $\{L,R\}^\infty$ with the product topology of the discrete topology on $\{L,R\}$,
 which makes $\Sigma$ a topological subspace of $\{L,R\}^\infty$.
Let $\sigma$ denote the left shift acting on 
 $\{L,R\}^\infty$: $\sigma((a_n)_{n=0}^\infty)=(a_n)_{n=1}^\infty$. We have
$\sigma(\Sigma)=\Sigma$.
Define
 $\pi\colon \Sigma\to X$ by 
\[\pi((a_n)_{n=0}^\infty)\in\bigcap_{n=0}^\infty {\rm cl}[a_0\cdots a_n],\]
 where ${\rm cl}(\cdot)$ denotes the closure operation.
By Lemma~\ref{q-lemma}(a),
 $\pi$ is well-defined and continuous.
Moreover, $\pi$ satisfies
$F\circ \pi=\pi\circ\sigma$, and 
 is one-to-one except on the set
\[\tilde S=\bigcup_{n=0}^{\infty} F^{-n}(S),\] where it is at most $2^{\Dim}$-to-one.

 \subsection{Lower bounds on the numbers of periodic points}\label{sec-multi}
  For $n\geq1$ and $r\in\{0,1\ldots,{\rm M}-1\}$,
 let ${\rm fix}(n,r)$ denote the set of
 $x\in{\rm fix}(n)$ such that $t(n,x)=q{\rm M}+r$ holds for a non-negative integer $q\leq\lfloor n/{\rm M}\rfloor$,
 where
$\lfloor \cdot\rfloor$ denotes the floor function.

   \begin{lemma}\label{proportion}
   For any $r\in\{0,1\ldots,{\rm M}-1\}$, we have
   \[\lim_{n\to\infty}
   \frac{\#{\rm fix}(n,r)}{\#{\rm fix}(n)}=\frac{1}{{\rm M}}.\]\end{lemma}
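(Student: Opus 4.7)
The plan is to reduce the problem to a pure combinatorial count on words in $\{L,R\}^n$ and then conclude by the multisection formula for binomial coefficients.

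First I would use the symbolic coding to count $\#{\rm fix}(n)$ and $\#{\rm fix}(n,r)$ explicitly. Since each basic $n$-rectangle other than $[L^n]$ contains exactly one element of ${\rm fix}(n)$, while every fixed point of $F^n$ in $[L^n]$ lies in $N$, one has $\#{\rm fix}(n)=2^n-1$. Moreover, the twist number of the unique periodic point lying in $[a_0\cdots a_{n-1}]$ equals the number of $R$'s appearing in the word $a_0\cdots a_{n-1}$. Thus, defining
\[ N_r(n)=\sum_{\substack{0\le k\le n\\ k\equiv r\,(\mathrm{mod}\,\Dim)}}\binom{n}{k}, \]
one finds $\#{\rm fix}(n,r)=N_r(n)$ for $r\neq 0$, while $\#{\rm fix}(n,0)=N_0(n)-1$ after removing the contribution of the word $L^n$.

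Next I would apply the multisection formula. Writing $\zeta=e^{2\pi i/\Dim}$ for a primitive $\Dim$-th root of unity,
\[ N_r(n)=\frac{1}{\Dim}\sum_{j=0}^{\Dim-1}\zeta^{-rj}(1+\zeta^j)^n. \]
The $j=0$ term contributes $2^n/\Dim$, while for $1\le j\le \Dim-1$ one has $|1+\zeta^j|=2|\cos(\pi j/\Dim)|<2$. Setting $c=2\cos(\pi/\Dim)<2$, this yields $N_r(n)=2^n/\Dim+O(c^n)$. Dividing by $\#{\rm fix}(n)=2^n-1$ and letting $n\to\infty$ gives the asserted limit $1/\Dim$.

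The only subtle point is the bookkeeping in the first step: one must identify ${\rm fix}(n,r)$ with a single-residue binomial sum (with a unit correction for $r=0$), which rests on the fact that distinct basic $n$-rectangles contribute distinct periodic orbits and that $[L^n]$ contributes only points of $N$. Once this identification is in hand, the remaining analysis is routine, as the multisection formula is classical and the strict inequality $|1+\zeta^j|<2$ for $1\le j\le\Dim-1$ ensures that the non-principal terms are exponentially dominated by $2^n$.
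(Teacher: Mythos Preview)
Your proposal is correct and follows essentially the same route as the paper: both reduce to the counts $\#{\rm fix}(n)=2^n-1$ and $\#{\rm fix}(n,r)=\sum_{q}\binom{n}{q\Dim+r}+O(1)$, then invoke the multisection formula and the strict inequality $|1+\zeta^j|<2$ (equivalently $|\cos(\pi j/\Dim)|<1$) for $1\le j\le\Dim-1$ to kill the non-principal terms. The only cosmetic differences are that the paper writes the multisection identity in its real (cosine) form rather than via roots of unity, and that you are slightly more careful in applying the $-1$ correction only when $r=0$.
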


\begin{proof}
For $n\geq1$, let $\Sigma_n$ denote the set of periodic points of period $n$ of the left shift $\sigma$.
Lemma~\ref{lem-S} implies
$\Sigma_n\cap\pi^{-1}(\tilde S)=\emptyset$, and
so
\begin{equation}\label{eq-new1}\#{\rm fix}(n)=\#\Sigma_n-1=2^n-1.\end{equation}
A similar reasoning shows
\begin{equation}\label{eq-new2}\#{\rm fix}(n,r)=\sum_{\stackrel{0\leq q\leq\lfloor n/\Dim\rfloor}{q\Dim+r\leq n}}\begin{pmatrix}n\\q\Dim+r\end{pmatrix}-1.\end{equation}

The multisection of a binomial expansion $(1+x)^n=\sum_{k=0}^n\left(\begin{smallmatrix}n\\k\end{smallmatrix}\right)x^k$ at $x=1$ in \cite{weisstein04} gives the following identity for the sum of binomial coefficients with step $\Dim$:
\begin{equation}\label{binomial}\sum_{\stackrel{0\leq q\leq\lfloor n/\Dim\rfloor}{q\Dim+r\leq n}}\begin{pmatrix}n\\q\Dim+r\end{pmatrix}=\frac{1}{\Dim}\sum_{k=0}^{\Dim-1}\left(2\cos\frac{\pi k}{\Dim}\right)^n\cos\frac{\pi(n-2r) k}{\Dim}.\end{equation} 

Put $\alpha=\cos(\pi/\Dim).$ We have
\[\left|\sum_{k=1}^{\Dim-1}\left(2\cos\frac{\pi k}{\Dim}\right)^n\cos\frac{\pi nk}{\Dim}\right|\leq (\Dim-1)(2\alpha)^n,\]
and hence
\[\begin{split}&\left|\frac{1}{2^n\Dim}\sum_{k=0}^{\Dim-1}\left(2\cos\frac{\pi k}{\Dim}\right)^n\cos\frac{\pi(n-2r)k}{\Dim}-\frac{1}{\Dim}\right|\\
&=
\left|\frac{1}{2^n\Dim}\sum_{k=1}^{\Dim-1}\left(2\cos\frac{\pi k}{\Dim}\right)^n\cos\frac{\pi (n-2r)k}{\Dim}\right|\leq\frac{\Dim-1}{\Dim}\alpha^{n}.\end{split}\] 
Since $\Dim\geq2$ we have $\alpha\in[0,1)$, and so the last number
 converges to $0$ as $n\to\infty$.
 This together with \eqref{eq-new1}, \eqref{eq-new2} yield
$\#{\rm fix}(n,r)/\#{\rm fix}(n)\to1/\Dim$ as $n\to\infty$. 
\end{proof}

 \begin{prop}\label{ratio-prop}
  For any ${\rm M}\geq2$, we have
  \[ \liminf_{n\to\infty}\frac{\#{\rm fix}_{\mathbb R}(n)}{\#{\rm fix}(n)}\geq \frac{1}{{\rm M}}\quad\text{and}\quad
  \liminf_{n\to\infty}\frac{\#{\rm fix}_{\mathbb C}(n)}{\#{\rm fix}(n)}\geq\frac{\min\{{\rm M}-1,2\}}{{\rm M}}.\]\end{prop}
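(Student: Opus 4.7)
The plan is to bootstrap the proposition directly from the three lemmas already established: Lemma~\ref{twist-lem} (real eigenvalues when the twist number is a multiple of $\Dim$), Lemma~\ref{c-eigenvalue} (a complex conjugate eigenvalue when the residue of the twist number modulo $\Dim$ is $1$ or $\Dim-1$), and Lemma~\ref{proportion} (the asymptotic density $1/\Dim$ of each residue class). The only combinatorial subtlety is handling the case $\Dim=2$ separately, since then the residues $1$ and $\Dim-1$ coincide.

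First I would observe that by Lemma~\ref{twist-lem} we have the inclusion
\[{\rm fix}(n,0)\subset{\rm fix}_{\mathbb R}(n),\]
because $r=0$ means $t(n,x)$ is an integer multiple of $\Dim$. Dividing by $\#{\rm fix}(n)$ and applying Lemma~\ref{proportion} with $r=0$ immediately gives
\[\liminf_{n\to\infty}\frac{\#{\rm fix}_{\mathbb R}(n)}{\#{\rm fix}(n)}\geq \lim_{n\to\infty}\frac{\#{\rm fix}(n,0)}{\#{\rm fix}(n)}=\frac{1}{\Dim},\]
which is the first assertion.

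For the second assertion I would apply Lemma~\ref{c-eigenvalue}, which yields
\[{\rm fix}(n,1)\cup{\rm fix}(n,\Dim-1)\subset{\rm fix}_{\mathbb C}(n).\]
Then I would split into two cases. If $\Dim=2$, then the two residues $r=1$ and $r=\Dim-1$ are equal, the union is just ${\rm fix}(n,1)$, and Lemma~\ref{proportion} gives the lower bound $1/\Dim=\min\{\Dim-1,2\}/\Dim$. If $\Dim\geq3$, then the residues $1$ and $\Dim-1$ are distinct, the sets ${\rm fix}(n,1)$ and ${\rm fix}(n,\Dim-1)$ are disjoint by definition, so
\[\frac{\#{\rm fix}_{\mathbb C}(n)}{\#{\rm fix}(n)}\geq \frac{\#{\rm fix}(n,1)}{\#{\rm fix}(n)}+\frac{\#{\rm fix}(n,\Dim-1)}{\#{\rm fix}(n)},\]
and two applications of Lemma~\ref{proportion} give the lower bound $2/\Dim=\min\{\Dim-1,2\}/\Dim$ (since $\Dim-1\geq 2$ in this range).

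There is no real obstacle here; the work was already done in Lemmas~\ref{twist-lem}, \ref{c-eigenvalue} and \ref{proportion}. The only thing to be careful about is the bookkeeping for $\Dim=2$ so that one does not double-count the single residue class and accidentally claim a bound of $2/\Dim$ instead of $1/\Dim$; writing the bound as $\min\{\Dim-1,2\}/\Dim$ precisely captures the transition.
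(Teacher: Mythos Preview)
Your proposal is correct and follows essentially the same approach as the paper: the paper's proof likewise invokes Lemma~\ref{twist-lem} for the inclusion ${\rm fix}(n,0)\subset{\rm fix}_{\mathbb R}(n)$, Lemma~\ref{c-eigenvalue} for ${\rm fix}(n,1)\cup{\rm fix}(n,\Dim-1)\subset{\rm fix}_{\mathbb C}(n)$, and then appeals to Lemma~\ref{proportion}. Your explicit case split for $\Dim=2$ versus $\Dim\geq3$ simply spells out what the paper leaves implicit in the expression $\min\{\Dim-1,2\}/\Dim$.
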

  \begin{proof}
  Lemma~\ref{twist-lem} gives ${\rm fix}_{\mathbb R}(n)\supset{\rm fix}(n,0) $, and Lemma~\ref{c-eigenvalue} gives ${\rm fix}_{\mathbb C}(n)\supset{\rm fix}(n,1)\cup {\rm fix}(n,\Dim -1)$.
  Hence, the desired inequalities 
   follow from  Lemma~\ref{proportion}.
  \end{proof}

\section{Proofs of the main results }
\label{sec twisted baker proof}

In this section we prove all the theorems. 
In what follows,
we shall exclude from our consideration the set
\[\tilde N=\bigcup_{n=0}^\infty F^{-n}(N),\]
which is precisely
the set of points in $X$ whose kneading sequence contains only finitely many $R$.
Note the identity 
\[\{(a_n(x))_{n=0}^\infty\in\{L,R\}^\infty\colon x\in \tilde N\}=\{L,R\}^\infty\setminus\Sigma.\]

\subsection{Proof of Theorem~A}
Theoreom~A(a) is a consequence of Lemma~\ref{q-lemma}.
To prove Theorem~A(b),
let $x\in X$, let $U$ be an open subset of $X$ containing $x$ and
let $y\in U\setminus \tilde N$.
By Lemma~\ref{q-lemma},
there exist $n\geq1$
such that  $[a_0(y)\cdots a_{n-1}(y)]\subset U$, and a periodic point 
 $z$ of $F$ of period $n$
in $[a_0(y)\cdots a_{n-1}(y)]$. 
Write
$t(n,y)=t(n,z)=q\Dim +r$ where $q,r$ are integers with $q\geq0$  and $0\leq r\leq \Dim-1$.
 In view of (P1), set 
$\Omega=[a_0(y)\cdots a_{n-1}(y)R]$.

If $z\in{\rm fix}_{\mathbb R}(n)$,
then
let $w$ denote the fixed point of $F^{n+\Dim-r+1}$ in $\Omega$ such that $a_k(w)=R$
for all $n+1\leq k\leq n+\Dim-r+1$. We have $t(n+\Dim-r+1,w)=t(n,z)+\Dim-r+1=(q+1)\Dim+1$,
and so 
$w\in{\rm fix}_{\mathbb C}(n+\Dim-r+1)$ 
by Lemma~\ref{c-eigenvalue}.
If $z\in{\rm fix}_{\mathbb C}(n)$,
then $r\neq0$ by Lemma~\ref{twist-lem}.
Let $w$ denote the fixed point of $F^{n+\Dim-r}$ in $\Omega$
such that $a_k(w)=R$
for all $k$ with $n+1\leq k\leq n+\Dim-r$.
Then $t(n+\Dim-r,w)=t(n,z)+\Dim-r=(q+1)\Dim$, and so $w\in{\rm fix}_{\mathbb R}(n+\Dim-r)$
by Lemma~\ref{twist-lem}.
Since $U$ is an arbitrary open subset of $X$ containing $x$,  we have verified that $x$ is contained in the closure of
$\bigcup_{n=1}^\infty{\rm fix}_{\mathbb R}(n)$ and that of $\bigcup_{n=1}^\infty{\rm fix}_{\mathbb C}(n)$.
Since $x\in X$ is arbitrary, Theorem~A(b) holds. \qed

\subsection{Proof of Theorem~B}
Fix a sequence $(p_n)_{n=1}^\infty$ of positive integers such that
$\sum_{k=1}^n p_k$ is an integer multiple of $\Dim$ for all $n\geq1$ and
$\lim_{n\to\infty}\sum_{k=1}^{n-1} p_k/p_n=0$.
By Lemma~\ref{q-lemma}, for all sufficiently large $n$ 
there exists a periodic point of period of $\sum_{k=1}^n p_k$ that is contained in 
$[L^{p_1-1}RL^{p_2-1}R\cdots L^{p_n-1}R]$ which we denote by $x_n$.
Note that $t(x_n, \sum_{k=1}^n p_k)=n$.
Suppose that $n$ is an integer multiple of $\Dim$.
%By Lemma~\ref{twist-lem} we have $z_{n}\in{\rm fix}_{\mathbb R}(\sum_{k=1}^{n} p_k)$. 
By Lemma~\ref{diagonal}, the matrix $JF^{\sum_{k=1}^{n} p_k}(x_{n})$ is diagonal:
\[JF^{\sum_{k=1}^{n} p_k}(x_{n})=\begin{pmatrix} \lambda_1^{(n)} & &\\
& \lambda_2^{(n)} &\\
& & \ddots\\
& & & \lambda_n^{(n)}\end{pmatrix}.\]
Since $\sum_{k=1}^np_k$ is an integer multiple of $\Dim$, the expansion by $F^{p_n-1}$ by factor $2^{p_n-1}$ corresponding to $L^{p_n-1}$ occurs in the second coordinate.
Hence, for all $j\in\{1,\ldots,\Dim\}\setminus\{2\}$ we have
$1\leq|\lambda_j^{(n)}|\leq  2^{\sum_{k=1}^{n-1}p_k+1},$
and \[1\leq\chi(x_{n})\leq
2^{\left(\sum_{k=1}^{n-1} p_k+1\right)/\sum_{k=1}^{n} p_k  }.\] 
It follows that 
$\liminf_{n\to\infty}\chi(x_{n})=1.$

Let $U$ be a non-empty open subset of $X$ and
let $x\in U\setminus \tilde N$.
From Lemma~\ref{q-lemma}, there exist an integer $\ell\geq1$ 
and a periodic point $z_n$ of period $\ell+\sum_{k=1}^n p_k$ in 
 $[a_0(x)\cdots a_{\ell-1}(x)L^{p_1-1}RL^{p_2-1}R\cdots L^{p_n-1}R]$
such that $t(x,\ell)$ is an integer multiple of $\Dim$ and $[a_0(x)\cdots a_{\ell-1}(x)]\subset U$.
Note that $t(z_n, \ell+\sum_{k=1}^n p_k)=t(x,\ell)+n$,
$JF^{\ell+\sum_{k=1}^n p_k}(z_n)=JF^{\sum_{k=1}^n p_k}(F^\ell(z_n))JF^\ell(z_n)$ 
and $JF^\ell(z_n)$ is a diagonal matrix by Lemma~\ref{diagonal}.
If $n$ is an integer multiple of $\Dim$, then by Lemma~\ref{diagonal},
 $JF^{\sum_{k=1}^np_k}(F^{\ell}(z_n))$ is a diagonal matrix.
%Since $\ell+\sum_{k=1}^{n} p_k$ is an integer multiple of $\Dim$,
%$x_{n}\in{\rm fix}_{\mathbb R}(\ell+\sum_{k=1}^{n} p_k)$
%by Lemma~\ref{twist-lem}. 
It follows that $\liminf_{n\to\infty}
\chi(z_{n})=\liminf_{n\to\infty}
\chi(x_{n})=1$. Since $U$ is an arbitrary open subset of $X$,  $F$ is non-uniformly expanding on periodic points.
\qed

\subsection{Proofs of Theorems~C and D}
From Lemma~\ref{q-lemma}(a), the partition $\{X_L,X_R\}$ of $X$ generates the Borel sigma-field on $X$.
By (P4), $m(X\setminus \tilde N)=1$ and the Shannon-McMillan-Breiman theorem, 
the (measure-theoretic) entropy of $m$ with respect to $F$
 is equal to
\[-\int\lim_{n\to\infty}\frac{1}{n} \log m([a_0(x)\cdots a_{n-1}(x)]){\rm d}m(x)=\log2.\]

 For $Y\in\{X,\Sigma,\{L,R\}^\infty\}$
 let $\mathcal M(Y)$ denote the space of Borel probability measures on $Y$ endowed with the weak* topology.
 For $y\in Y$ let
 $\delta_y\in \mathcal M(Y)$ denote the unit point mass at $y\in Y$.

 Let $\mu_{\rm max}$ denote  the $(1/2,1/2)$-Bernoulli measure on $\{L,R\}^\infty$.
 Since the restriction of $\pi$ to the set $\Sigma\setminus\pi^{-1}(\tilde S)$ has a continuous inverse $x\mapsto (a_n(x))_{n=0}^\infty$, 
 $\pi$ induces a measurable bijection between $\Sigma\setminus\pi^{-1}(\tilde S)$
and $\pi(\Sigma)\setminus \tilde S$.
Therefore, for any $F$-invariant measure $\nu\in\mathcal M(X)$ with $\nu(\tilde S)=0$, 
there exists a $\sigma|_{\Sigma}$-invariant measure $\mu\in\mathcal M(\Sigma)$ 
such that  $\nu=\mu\circ\pi^{-1}$ and 
 the entropy of $\mu$ with respect to $\sigma|_{\Sigma}$ equals the entropy of $\nu$ with respect to $F$.

  Define  $\pi_*\colon\mathcal M(\Sigma)\to
\mathcal M(X)$ by $\pi_*\mu=\mu\circ\pi^{-1}$. 
The measure $\mu_{\rm max}$ is the unique measure of maximal entropy of $\sigma$ and the entropy is $\log2$.
 Since the entropy of $m$ with respect to $F$ is $\log 2$ and $m(\tilde S)=0$, 
we obtain
$\pi_*\mu_{\rm max}|_\Sigma=m$
where $\mu_{\rm max}|_\Sigma$ denotes the restriction of $\mu_{\rm max}$ to $\Sigma$.
In other words,
$(F,m)$ is isomorphic to $(\sigma,\mu_{\rm max})$. 
Since the latter is mixing, so is $(F,m)$ and the proof of
 Theorem~D is complete.

For ${\mathbb K}={\mathbb R}$ or ${\mathbb K}={\mathbb C}$ and 
$n\geq1$, 
consider measures 
\[\nu_{n,{\mathbb K}}=\frac{1}{\#{\rm fix}_{{\mathbb K}}(n)}\sum_{x\in{\rm fix}_{{\mathbb K}}(n)}\delta_x\in\mathcal M(X),\quad
\nu_n=\frac{1}{\#{\rm fix}(n)}\sum_{x\in{\rm fix}(n)}\delta_x\in\mathcal M(X),\]
 \[\quad\text{and}\quad\mu_n=\frac{1}{\#{\rm fix}'(n)}\sum_{x\in{\rm fix}'(n)}\delta_x\in\mathcal M(\Sigma),\]
where ${\rm fix}'(n)=\{x\in\Sigma\colon\sigma^nx=x\}$.
Since the sequence $\{2^{-n}\sum_{x\in\{L,R\}^\infty,\sigma^nx=x}\delta_x\}_{n=1}^\infty$
 in $\mathcal M(\{L,R\}^\infty)$
 converges to $\mu_{\rm max}$ as $n\to\infty$ in the weak* topology and all 
 but one periodic points of $\sigma$ 
 are contained in $\bigcup_{n=1}^\infty{\rm fix}'(n)$, the sequence
 $(\mu_n)_{n=1}^\infty$ converges to $\mu_{\rm max}|_{\Sigma}$ as $n\to\infty$ in the weak* topology. Since $\pi_*$ is continuous,
  $\pi_*\mu_{\rm max}|_{\Sigma}=m$ and $\pi_*\mu_n=\nu_n$, the sequence $(\nu_n)_{n=1}^\infty$ converges to $m$ as $n\to\infty$
in the weak* topology.

Write \begin{equation}\label{decomposition}\nu_n=\frac{\#{\rm fix}_{\mathbb R}(n)}{\#{\rm fix}(n)}\nu_{n,{\mathbb R}}+\frac{\#{\rm fix}_{\mathbb C}(n)}{\#{\rm fix}(n)}\nu_{n,{\mathbb C}}.\end{equation}
Since the space $\mathcal M(X)$ is compact, from any subsequence of the sequence of positive integers
we can choose a subsequence $(n_k)_{k=1}^\infty$ such that both $(\nu_{n,{\mathbb R}})_{n=1}^\infty$ and $(\nu_{n,{\mathbb C}})_{n=1}^\infty$ converge
to measures $\nu_{\mathbb R}$ and $\nu_{\mathbb C}$ respectively.
From Proposition~\ref{ratio-prop}, letting $k\to\infty$ in \eqref{decomposition} we obtain
$m=\rho\nu_{\mathbb R}+(1-\rho)\nu_{\mathbb C}$, $\rho\in(0,1)$.
The ergodicity of $(F,m)$ yields $\nu_{\mathbb R}=\nu_{\mathbb C}=m$.
We have verified that 
any subsequence of
$(\nu_{n,{\mathbb K}})_{n=1}^\infty$  has a subsequence that converges to $m$ in the weak* topology. Therefore,
$(\nu_{n,{\mathbb K}})_{n=1}^\infty$ 
converges in the weak* topology to $m$.
The proof of Theorem~C is complete.
\qed

\begin{remark}
The proof of Theorem~D shows that $(F,m)$ is Bernoulli. Hence it is exact, and mixing.
\end{remark}

\appendix
\def\thesection{\Alph{section}}
\section{}

\subsection{Prime periodic points}\label{prime}
Let $\tilde{\gamma}(x)$ denote the prime period of a hyperbolic repelling periodic point $x$.
  For $\mathbb K=\mathbb R$ or $\mathbb K=\mathbb C$ and $n\geq1$, let 
 \[{\rm \widetilde{fix}}_{\mathbb K}(n)=\{x\in {\rm {fix}}_{\mathbb K}(n)\colon \tilde{\gamma}(x)=n\}.\]
 We have
 ${\rm {fix}}_{\mathbb K}(n)=\sum_{d\geq1\colon d|n}\theta(n/d){\rm \widetilde{fix}}_{\mathbb K}(d)$ where $\theta$ denotes the M\"obius function.
 Using the M\"obius inversion formula and Proposition~\ref{ratio-prop} we obtain
\[\liminf_{n\to\infty}
\frac{\#{\rm \widetilde{fix}}_{\mathbb R}(n)}{\#{\rm {fix}}(n)}>0\quad\text{and}\quad\liminf_{n\to\infty}\frac{\#{\rm \widetilde{fix}}_{\mathbb C}(n)}{\#{\rm {fix}}(n)}>0.\]
Using these equalities and the part of the proof of Theorem~C, we obtain
  \[\lim_{n\to\infty}\frac{1}{\#{\rm \widetilde{fix}}_{\mathbb K}(n)}\sum_{x\in{\rm \widetilde{fix}}_{\mathbb K}(n)}\phi(x)=\int\phi {\rm d}m\]
for any continuous function $\phi\colon X\to\mathbb R$.

\subsection{Ergodicity}
We give an alternative proof of the ergodicity of $(F,m)$ that is based on the next lemma.
\medskip

\begin{lemma}\label{ergodic-lem} 
Let $A$ be a Borel subset of $X$ satisfying $F^{-1}(A)=A$.
For each nonempty open set $U$ of $X$ we have $m(A\cap U)=m(A)m(U)$.
\end{lemma}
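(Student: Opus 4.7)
The plan is to prove the identity first on basic rectangles and then upgrade to arbitrary open sets by exhausting them (modulo an $m$-null set) by countably many disjoint basic rectangles.

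For the rectangle case, fix a basic $n$-rectangle $[w]$. The restriction $F^n|_{[w]}$ is affine and injective, its image covers ${\rm int}(X)$, and $|\det JF^n|\equiv 2^n$ on ${\rm int}([w])$. Hence for every Borel $B\subset X$,
\[m(F^{-n}(B)\cap [w])=m\bigl((F^n|_{[w]})^{-1}(B)\bigr)=2^{-n}m(B)=m([w])m(B),\]
where the last equality uses property (P4). Since $F^{-1}(A)=A$ implies $F^{-n}(A)=A$ for all $n\geq 0$, specializing $B=A$ gives $m(A\cap [w])=m(A)m([w])$.

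For a nonempty open $U\subset X$, the strategy is to decompose $U\setminus\tilde N$ into a countable disjoint union of basic rectangles. For each $x\in U\setminus\tilde N$, Lemma~\ref{q-lemma}(a) ensures that the nested sequence $([a_0(x)\cdots a_{n-1}(x)])_{n\geq 1}$ shrinks to $\{x\}$; let $n(x)\geq 1$ be the least integer with $[a_0(x)\cdots a_{n(x)-1}(x)]\subset U$, and set $R(x)=[a_0(x)\cdots a_{n(x)-1}(x)]$. By (P3) and the minimality of $n(\cdot)$, any two members of $\{R(x):x\in U\setminus\tilde N\}$ are either equal or disjoint, so this collection enumerates a countable pairwise disjoint family $\{[w_j]\}_j$ of basic rectangles with $\bigsqcup_j [w_j]=U\setminus\tilde N$. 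Combining with $m(\tilde N)=0$ and the rectangle identity,
\[m(A\cap U)=\sum_j m(A\cap [w_j])=m(A)\sum_j m([w_j])=m(A)m(U).\]

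The only real subtlety is this exhaustion step: the shrinking of basic rectangles provided by Lemma~\ref{q-lemma}(a) fails on $\tilde N$, but since $m(\tilde N)=0$, this defect plays no role in the measure identity. The rectangle case itself is a direct consequence of the fact that each $F^n|_{[w]}$ is an affine bijection onto a subset of $X$ of full $m$-measure, together with $F$-invariance of $A$.
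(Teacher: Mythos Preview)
Your proof is correct and follows essentially the same approach as the paper: establish the identity on each basic rectangle using the affine structure of $F^n|_{[w]}$ together with $F^{-n}(A)=A$, then cover $U$ (modulo the $m$-null set $\tilde N$) by countably many pairwise disjoint basic rectangles and sum. One minor imprecision: the equality $\bigsqcup_j[w_j]=U\setminus\tilde N$ should read $U\setminus\tilde N\subset\bigsqcup_j[w_j]\subset U$, since a rectangle $R(x)$ may contain points of $\tilde N$; this does not affect the measure identity.
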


\begin{proof}
 Lemma~\ref{q-lemma}(a) implies that
$U\setminus \tilde N$ is contained in the union of countably many basic rectangles in $U$. 
If there are two overlapping basic rectangles which take part in this union,
one contains the other by (P3).
Hence, there exists a countable collection $\mathscr{R}$ 
of pairwise disjoint basic rectangles in $U$ such that 
$U\setminus \tilde N \subset \bigcup_{\Omega\in \mathscr{R}} \Omega.$ 
Moreover, we have 
$$
m(U\setminus \tilde N)=m(U)=\displaystyle\sum_{\Omega \in \mathscr{R}} m(\Omega).
$$

Suppose that $\Omega\in \mathscr{R}$ is a basic $n$-rectangle.
Using $F^{-n}(A)=A$ we have
%\textcolor{red}{(false) $F^n(A\cap\Omega)=F^n(F^{-n}(A)\cap\Omega)=A$.}
\[\frac{m(A\cap\Omega)}{m(\Omega)}=\frac{m(F^{n}(A\cap\Omega))}{m(F^n(\Omega))}
=\frac{m(F^n(F^{-n}(A)\cap\Omega))}{m(X)}=m(A).\]
Multiplying $m(\Omega)$ and
summing the result over all $\Omega\in\mathscr{R}$ yields the desired equality.
\end{proof}

\begin{prop}\label{ergodic-prop}
The twisted baker map $F$ is ergodic relative to $m$.\end{prop}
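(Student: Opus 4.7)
The plan is to leverage Lemma~\ref{ergodic-lem} to promote the factorization identity $m(A\cap U)=m(A)m(U)$ from open sets $U$ to arbitrary Borel sets, and then specialize the second factor to be $A$ itself. So I would begin by fixing a Borel subset $A\subseteq X$ satisfying $F^{-1}(A)=A$, and invoking Lemma~\ref{ergodic-lem} to record that $m(A\cap U)=m(A)m(U)$ for every nonempty open $U\subseteq X$ (and trivially for $U=\emptyset$).

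The next step is a standard outer-regularity argument. Given any Borel $B\subseteq X$ and $\varepsilon>0$, choose an open $U\supseteq B$ with $m(U\setminus B)<\varepsilon$; then both $|m(A\cap U)-m(A\cap B)|$ and $|m(U)-m(B)|$ are bounded by $\varepsilon$, so passing to the limit $\varepsilon\to0$ in the identity from Lemma~\ref{ergodic-lem} gives
\[
m(A\cap B)=m(A)m(B)\qquad\text{for every Borel }B\subseteq X.
\]
Finally, setting $B=A$ yields $m(A)=m(A)^{2}$, so $m(A)\in\{0,1\}$, which is ergodicity of $(F,m)$.

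Since the structural content (using $F^{-n}(A)=A$ together with the affinity of $F^{n}$ on each basic $n$-rectangle, and the fact that basic rectangles essentially tile any open set) has already been absorbed into Lemma~\ref{ergodic-lem}, no genuine obstacle remains; the step from that lemma to the proposition is purely measure-theoretic and relies only on outer regularity of $m$ on $X$.
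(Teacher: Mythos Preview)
Your proof is correct. Both the paper and you start from Lemma~\ref{ergodic-lem}, but the final step differs: the paper argues by contradiction via the Lebesgue density theorem, choosing a density point $x$ of a hypothetical invariant set $A$ with $0<m(A)<1$ and observing that $m(A\cap B_\varepsilon)/m(B_\varepsilon)\to1$ clashes with the constant value $m(A)$ forced by the lemma on open balls. You instead use outer regularity of $m$ to upgrade the identity $m(A\cap U)=m(A)m(U)$ from open $U$ to arbitrary Borel $B$, and then set $B=A$. Your route is arguably cleaner and slightly more elementary (no density theorem), while the paper's route is more geometric; both are standard ways to pass from the open-set identity to ergodicity, and neither offers a real advantage in this setting.
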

\begin{proof}
If $(F,m)$ is not ergodic,
 there exists a Borel subset $A$ of $X$ such that $0<m(A)<1$ and $F^{-1}(A)=A$.
 By the Lebesgue density theorem,
 $m$-almost every point of $A$ is a Lebesgue density point of $A$.
 Let $x\in A$ be a Lebesgue density point of $A$ in ${\rm int}(X)$.
 For $\varepsilon>0$ let $B_\varepsilon$ denote the open ball of radius $\varepsilon$ about $x$. If $\varepsilon$ is sufficiently small, $B_\varepsilon$ is contained in $X$, and we have
$\lim_{\varepsilon\to0}m(B_\varepsilon\cap A)/m(B_\varepsilon)=1$. Taking $B_\varepsilon = U$ in Lemma~\ref{ergodic-lem} we obtain $m(B_\varepsilon\cap A)/m(B_\varepsilon)=m(A)$.
Since $m(A)<1$ by hypothesis, we reach a contradiction.
\end{proof}

\subsection*{Acknowledgments}
We thank anonymous referees for their careful readings of the manuscript and giving useful comments. 
This research was supported by the JSPS KAKENHI
19KK0067, 19K21835, 20H01811.

\end{document}